\newcommand{\ddt}[1]{\frac{\mathrm{d}#1}{\mathrm{d}t}}
\numberwithin{equation}{section}
\newcommand{\D}{\mathcal{D}}
\newtheorem{theorem}{Theorem}[section]
\newtheorem{lemma}[theorem]{Lemma}
\newtheorem{proposition}[theorem]{Proposition}
\newtheorem{corollary}[theorem]{Corollary}
\theoremstyle{definition}
\newtheorem{remark}[theorem]{Remark}
\theoremstyle{definition}
\newtheorem{definition}[theorem]{Definition}
\newtheoremstyle{withcitation}
{\topsep}{\topsep}{\itshape}{}{\bfseries}{.}{ }{\thmname{#1}\thmnumber{ #2}\thmnote{ (#3)}}
\theoremstyle{withcitation}
\newtheorem{lemmacite}[theorem]{Lemma}
\begin{document}
	\title{A prescribed curvature flow on hyperbolic surfaces with infinite topological type}
	\author{Xinrong Zhao, Puchun Zhou}
	\date{}
	\maketitle

        \begin{abstract}
In this paper, we investigate the prescribed total geodesic curvature problem for generalized circle packing metrics in hyperbolic background geometry on surfaces with infinite cellular decompositions. To address this problem, we introduce a prescribed curvature flow—a discrete analogue of the Ricci flow on noncompact surfaces—specifically adapted to the setting of infinite cellular decompositions. We establish the well-posedness of the flow and prove two convergence results under certain conditions. Our approach resolves the prescribed total geodesic curvature problem for a broad class of surfaces with infinite cellular decompositions, yielding, in certain cases, smooth hyperbolic surfaces of infinite topological type with geodesic boundaries or cusps. Moreover, the proposed flow provides a method for constructing hyperbolic metrics from appropriate initial data.
        \end{abstract}
        
	\section{Introduction}
Circle packing, as a discrete analog of conformal structure, was rediscovered by Thurston in his well-known note \cite{thurston1976geometry} for studying three-dimensional geometry and topology. Thurston conjectured that circle packings on the hexagonal triangulation can be used to approximate the Riemann maps, and the conjecture was proved by Rodin and Sullivan in \cite{Rodin_Sullivan}. The first existence result of circle packing on the plane was proved by Koebe \cite{koebe1936origin}, and was extended to all compact surfaces, which was now called the Koebe-Andreev-Thurston theorem (KAT theorem for short). After that, Colin de Verdi\`ere proposed an alternative proof of the existence and uniqueness of circle packings with a variational method, see \cite{MR1106755}. For other related variational methods, see the works of Br\"agger \cite{MR1189006}, Rivin \cite{MR1283870} and Leibon \cite{MR1914573}. For a systematic treatment, one is referred to the work of Bobenko and Springborn \cite{MR2022715}, which presents a comprehensive analysis of the energy functions associated with circle patterns.

The Ricci flow introduced by Hamilton \cite{hamilton1982three} is a very important tool in differential geometry and geometric analysis, which has many important applications, see e.g.  \cite{perelman2002entropy,MR2415394,MR2449060}. Studying the Ricci flows on noncompact manifolds is also a very important topic, which simultaneously serves as a key motivation for the work in this paper. On noncompact manifolds, even establishing the well-posedness of the Ricci flow is highly non-trivial.
 The short-time existence of Ricci flows on noncompact manifolds was first proved by Shi in \cite{Shi_noncompact}, where the boundedness of the Riemannian curvature tensor is assumed. And the uniqueness result was proved by Chen and Zhu in \cite{MR2260930}. After that, many important results on Ricci flows on noncompact manifolds were also established, see  \cite{MR2520796,MR4015429,MR4494617,MR2832165,MR3352241,ToppingYin24}.

Inspired by the convergence of Ricci flows on compact surfaces \cite{ham,chow1}, Chow and Luo introduced the combinatorial Ricci flow \cite{2003Combinatorial} which gives a parabolic proof of the KAT theorem. Moreover, the flow approach provides an effective algorithm for finding desired circle packings on surfaces with genus greater than zero. After that, the combinatorial Ricci flows in two and three dimensions were also used for studying ideal circle patterns \cite{ge2021combinatorial} and compact 3-manifolds \cite{feng2022combinatorial2}. Aside from the combinatorial Ricci flows, other combinatorial curvature flows were also introduced, which yield fruitful results. The combinatorial Yamabe flows were introduced by Luo \cite{luo2004combinatorial} for studying surfaces with discrete conformal structures. And the combinatorial Calabi flows are introduced by Ge \cite{ge_phd} for studying surfaces with constant discrete Gaussian curvatures. 

Circle packings on infinite triangulations are the discrete analog of conformal structures on noncompact surfaces. For infinite circle packings on the plane, important results on the existence and uniqueness of univalent circle packings were proved by He and Schramm, see Schramm \cite{Schramm_rigidity}, He-Schramm \cite{He_schramm} and He \cite{HE}.  

For corresponding parabolic results on infinite circle packings, Ge, Hua and the second author \cite{GeHuaZhou} first introduced the combinatorial Ricci flows on infinite disk triangulations in both Euclidean and hyperbolic background geometries, and proved the well-posedness of the flows and the convergence results. See also \cite{YuHao} for the infinite flow in the spherical case.

Recently, Ba-Hu-Sun studied the problem of prescribed total geodesic curvature on generalized hyperbolic circle packings in \cite{BHS}, which provides a method for studying hyperbolic circle packing metrics with cusps and geodesic boundaries.
The study of total geodesic curvatures on circle patterns originates from the
pioneer work of Nie \cite{nie2023circle}, where an energy function of ideal circle patterns on the sphere was introduced. It was proved that the spherical ideal circle patterns can be characterized by the total geodesic curvatures on circles. After that, total geodesic curvatures of generalized circle packings in hyperbolic background geometry on triangulated surfaces were studied in \cite{BHS}. Later, generalized hyperbolic circle packings on finite polygonal cellular decompositions were defined in \cite{HQSZ}, and the problem of prescribed total geodesic curvature was settled. Moreover, a prescribed curvature flow was introduced in \cite{HQSZ} for finding finite generalized circle packing metrics with prescribed total geodesic curvature.

To study hyperbolic circle packing metrics of infinite topological type, it is natural to consider the prescribed total geodesic curvature problem on infinite polygonal cellular decompositions. Motivated by the work of \cite{GeHuaZhou}, we introduce the prescribed curvature flow \eqref{eq:PCF} on such infinite cellular decompositions.
A fundamental question is whether there exists a generalized circle packing metric on an infinite cellular decomposition of a hyperbolic surface realizing a given total geodesic curvature. Such surfaces may possess infinitely many cone singularities, geodesic boundaries, and cusps. In this paper, we investigate the prescribed curvature flow on infinite cellular decompositions and prove several existence results using the flow method.

Let $\D=(V,E,F)$ be an infinite cellular decomposition that supports a generalized circle packing $\mathcal{C}=\{C_i\}_{i\in V}$ in hyperbolic background geometry; see Section \ref{sec:2} for details. 

\begin{definition}[Infinite prescribed curvature flow]\label{def:ricci-flow}
		Let $k_i$ be the geodesic curvature on circle $C_i$ and $s_i=\ln k_i$. Let $T_i$ be the total geodesic curvature of the circle $C_i$. For a prescribed total geodesic curvature $\hat{T}\in \mathbb{R}_+^V$, the \textbf{infinite prescribed curvature flow} (PCF) with initial value $s^0\in \mathbb{R}^{V}$ is defined as
        \begin{equation}\label{eq:PCF}
		\left\{\begin{aligned} &\frac{{\rm d} s_i}{{\rm d} t}=-(T_i-\hat{T}_i),  \quad \forall i \in V,\\&s(0)=s^0 .
        \end{aligned}\right.
		\end{equation}

	\end{definition}
      We first consider the well-posedness of the flow.
	
	\begin{theorem}[Well-posedness of the PCFs]
		Let $\mathcal{D}= (V, E, F)$ be an infinite polygonal cellular decomposition of a noncompact surface $S$. For any initial value $s^0$, there exists a solution $s(t)$, $t\in[0,\infty)$, to the flow \eqref{eq:PCF}. Moreover, if the cellular decomposition has bounded face degree, then the solution to the flow with uniformly bounded total geodesic curvatures on $V \times [0, \infty)$ is unique.
	\end{theorem}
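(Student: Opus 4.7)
The plan is to treat \eqref{eq:PCF} as an infinite system of coupled ODEs whose right-hand side is spatially local---each $T_i$ depends only on the values $s_j$ for $j$ in a bounded combinatorial neighborhood of $i$---which reduces most of the analysis to the finite case.

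\textbf{Existence.} I would fix an exhaustion $\mathcal{D}_1 \subset \mathcal{D}_2 \subset \cdots$ of $\mathcal{D}$ by finite sub-decompositions with $\bigcup_n \mathcal{D}_n = \mathcal{D}$. On each $\mathcal{D}_n$, \eqref{eq:PCF} restricts to a locally Lipschitz ODE on a finite-dimensional Euclidean space and thus admits a short-time solution $s^{(n)}(t)$; extending to $[0,\infty)$ requires ruling out finite-time blow-up, which should follow by combining the nonnegativity of $T_i$ with upper bounds coming from the geometric constraints on generalized circle packings, along the lines of the finite theory of \cite{HQSZ}. On any fixed compact time interval $[0,T]$ and fixed vertex $i \in V$, the sequence $\{s^{(n)}_i(t)\}_n$ is eventually defined, uniformly bounded, and has uniformly bounded derivative (both bounds being local in $i$ and independent of $n$ for $n$ large). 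A Cantor diagonal argument combined with Arzel\`a--Ascoli then extracts a subsequence converging pointwise on $V$ and uniformly on compact time intervals to a limit $s(t)$; the locality of the right-hand side guarantees that $s(t)$ solves \eqref{eq:PCF}. The main obstacle in this step is securing the uniform-in-$n$ a priori bounds without any global geometric control at infinity, which is precisely where one must exploit the structure of generalized circle packings, much as in \cite{GeHuaZhou} for combinatorial Ricci flows on infinite triangulations.

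\textbf{Uniqueness.} Suppose $s^1(t)$ and $s^2(t)$ are two solutions with $s^1(0)=s^2(0)=s^0$ and with $T_i(s^j(t))$ uniformly bounded on $V\times[0,\infty)$ for $j=1,2$. The curvature bound confines each of the finitely many relevant coordinates $s^j_k$ with $k$ in the combinatorial neighborhood $N(i)$ to a bounded region on which $T_i$ is $C^1$. Bounded face degree ensures that $|N(i)|$ is uniformly bounded in $i$, so the partial derivatives $\partial T_i/\partial s_k$ are uniformly bounded by some constant $L$ that does not depend on $i$. Setting $w(t) = \sup_{i\in V}|s^1_i(t)-s^2_i(t)|$, subtracting the two flows gives
\[
\left|\ddt{(s^1_i - s^2_i)}\right| = |T_i(s^1(t)) - T_i(s^2(t))| \le L\, w(t)
\]
for every $i\in V$. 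Integrating in time, taking the supremum over $i$, and applying Grönwall's inequality forces $w \equiv 0$, yielding uniqueness.
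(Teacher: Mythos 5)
Your existence argument is essentially the paper's: exhaust $\mathcal{D}$ by finite subcomplexes, run the finite flows (the paper freezes the values on $\partial_F V_i$, cf.\ \eqref{eq:FPCF}), get local a priori bounds, and pass to the limit by Arzel\`a--Ascoli plus a diagonal argument. The step you flag as the "main obstacle" -- uniform-in-$n$ bounds without geometric control at infinity -- is in fact immediate: by \cref{cor:boundness} one always has $0<T_j<\pi\deg(j)$, so $|\frac{\mathrm{d}}{\mathrm{d}t}s_j^{[i]}|\le\pi\deg(j)+\hat T_j$ regardless of the configuration; the bounds only need to be local in the vertex (they may depend on $\deg(j)$, $s^0_j$, $\hat T_j$), since Arzel\`a--Ascoli is applied vertex by vertex. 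So this half is fine.

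The uniqueness argument has a genuine gap. It rests on two claims that do not follow from the hypotheses. First, a uniform bound on the total geodesic curvatures does \emph{not} confine the coordinates $s_k$ to a bounded region: $T_{i,P}<\pi$ holds for every configuration, so $s_i$ can tend to $+\infty$ (circle degenerating to a point) with curvature still bounded, and $s_i\to-\infty$ forces $T_i\to 0$, also bounded; moreover $s^0$ is an arbitrary element of $\mathbb{R}^V$, so even on a finite time interval the solution values are not uniformly bounded over $V$. Hence no uniform constant $L$ bounding $\partial T_i/\partial s_k$ is available from "curvature bounded $+$ $T_i$ is $C^1$". Second, bounded \emph{face} degree does not bound $|N(i)|$ or the number of faces incident to $i$; the theorem does not assume bounded vertex degree. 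Since $\frac{\partial T_i}{\partial s_i}=\sum_{P:i\in V(P)}\frac{\partial T_{i,P}}{\partial s_i}$ with every summand positive (\cref{lem:sum}), the diagonal Lipschitz constant of $T_i$ can blow up with $\deg(i)$, and then your sup-norm Gr\"onwall inequality $|\frac{\mathrm{d}}{\mathrm{d}t}(s^1_i-s^2_i)|\le L\,w(t)$ has no uniform $L$, so the argument collapses exactly in the regime the theorem is meant to cover. The paper avoids both problems structurally: it writes $g=s-\hat s$ as a solution of $\frac{\mathrm{d}g_i}{\mathrm{d}t}=\Delta_{\omega(t)}g(i)+h_i(t)g_i$ with positive weights $\omega_{ij}=-\int_0^1\frac{\partial T_i}{\partial s_j}\,\mathrm{d}\tau$ and $h_i<0$ (\cref{lem:uniqueness}), so the possibly unbounded diagonal term enters only with a favorable sign and never needs to be estimated; the off-diagonal row sums are controlled not by compactness but by the estimate $\bigl|\sum_{j\sim i}\frac{\partial T_i}{\partial s_j}\bigr|\le C\,T_i$ (\cref{lem:BoundnessOfSum}), combined with bounded face degree and $|s_i-\hat s_i|\le\mathcal{K}M$ to compare $T_i$ along the convex combination with $T_i$ at the endpoints; the assumed uniform curvature bound then yields \eqref{eq:omegabound}, and the maximum principle on infinite graphs from \cite{GeHuaZhou} gives $g\equiv 0$. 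To repair your proof you would need a substitute for this structural bound; a blanket uniform $C^1$ estimate on a bounded region is not available here.
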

 Here the \textbf{degree of a face} means the number of edges of the face. And the \textbf{degree} $\deg(v)$ of a vertex $v\in V$ refers to the number of faces incident to $v$.
 By \cref{cor:boundness} the total geodesic curvature $T_v$ of the circle $C_v$ is less than $\pi\deg(v)$ for each vertex $v\in V$. As a corollary, the PCFs are always well-posed when the cellular decomposition has a uniform bound on vertex degrees. 
 \begin{corollary}
     Let $\mathcal{D}= (V, E, F)$ be an infinite polygonal cellular decomposition of a noncompact surface $S$ with bounded vertex and face degree. For any initial value $s^0$, there exists a unique solution $s(t)$, $t \in [0, \infty)$, to the flow \eqref{eq:PCF}. 
 \end{corollary}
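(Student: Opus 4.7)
The plan is to deduce this corollary directly from the preceding theorem, with the bounded vertex degree hypothesis serving exclusively to upgrade the theorem's conditional uniqueness statement into an unconditional one. Existence of a solution $s(t)$ on $[0,\infty)$ for any initial value $s^0$ is immediate from the theorem, so the entire argument reduces to verifying the hypothesis under which the theorem guarantees uniqueness: namely, that the total geodesic curvatures along the solution are uniformly bounded on $V\times[0,\infty)$. The complementary hypothesis of bounded face degree is part of the assumption.

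For the upper bound I would invoke \cref{cor:boundness}, which gives the pointwise a priori estimate $T_v<\pi\deg(v)$ for every $v\in V$ and every admissible generalized circle packing metric on $\mathcal{D}$. Under the assumption $\deg(v)\leq M$ for some finite $M$ and all $v\in V$, this yields $T_v(t)<\pi M$ uniformly in $v$ and $t$. The matching lower bound is essentially automatic: the parametrization $s_i=\ln k_i$ forces $k_i(t)>0$ along the flow, so each $T_i(t)$ is positive as the integral of a positive geodesic curvature. Together these give $0<T_v(t)<\pi M$ uniformly on $V\times[0,\infty)$, which is exactly the uniform boundedness required to trigger the theorem's uniqueness clause.

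The only point I would be careful about, and which I expect to be the only real step of substance, is that along the flow the configuration $s(t)$ remains an admissible generalized circle packing on $\mathcal{D}$ at every time $t\geq 0$, so that \cref{cor:boundness} is legitimately applicable at each instant rather than only at the initial time. Since the flow \eqref{eq:PCF} is defined through this parametrization and the theorem's existence statement already produces a solution of this form on $[0,\infty)$, this admissibility is built into the formulation; the remainder of the argument is then a direct application of the theorem combined with the two uniform estimates above.
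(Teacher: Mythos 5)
Your proposal is correct and follows essentially the same route as the paper: existence comes directly from the well-posedness theorem, and the bound $T_{i,P}<\pi$ from \cref{cor:boundness} summed over the at most $M$ faces incident to each vertex gives $T_v<\pi\deg(v)\leq\pi M$ for every admissible configuration, so any solution automatically satisfies the uniform boundedness hypothesis of the uniqueness clause (with bounded face degree assumed). The admissibility concern you raise is handled exactly as you suggest, since $s_i=\ln k_i$ keeps $k_i>0$ along the flow.
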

 
This is different from the Ricci flows in the continuous case, where uniqueness results do not hold for general noncompact manifolds.

    Moreover, we establish two convergence results for the prescribed curvature flows (PCFs) and derive several existence results for the prescribed total geodesic curvature problem.
        \begin{theorem}[Convergence of the hyperbolic PCFs I]\label{thm:convergenceI}
        Let $\mathcal{D} = (V, E, F)$ be an infinite polygonal cellular decomposition of a noncompact surface $S$. Let $\hat{T} = (\hat{T}_i)_{i \in V} \in \mathbb{R}_{+}^{V}$ be the prescribed total geodesic curvatures, and let $s^0$ be an initial value such that $T_i(s^0) - \hat{T}_i \geq 0$ for all $i \in V$. Then there exists a solution $s(t)$ to the flow~\eqref{eq:PCF} with initial value $s^0$, which converges to a generalized circle packing metric with total geodesic curvatures $\hat{T}$.
        \end{theorem}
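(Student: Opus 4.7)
The plan is to combine a parabolic-type maximum principle with a monotone convergence argument. By the well-posedness theorem stated above, I first obtain a global-in-time solution $s(t)$ of \eqref{eq:PCF} on $[0,\infty)$ with $s(0)=s^0$.

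The crux is to show that $\Phi_i(t):=T_i(s(t))-\hat T_i\geq 0$ is preserved along the flow for every $i\in V$ and every $t\geq 0$. Differentiating along \eqref{eq:PCF} gives
\[
\frac{d T_i}{dt}=-\sum_{j\in V}\frac{\partial T_i}{\partial s_j}\,\Phi_j ,
\]
so $\Phi$ satisfies a linear evolution whose coefficient matrix $\bigl(\partial T_i/\partial s_j\bigr)$ has, as in the finite setting of \cite{HQSZ,BHS}, positive diagonal and nonpositive off-diagonal entries. In the finite case this structure directly yields the comparison principle $\Phi(0)\geq 0 \Rightarrow \Phi(t)\geq 0$. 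To transfer this to the infinite system I would exhaust $\mathcal D$ by an increasing sequence of finite subcomplexes $\mathcal D_n$, establish the inequality for an auxiliary truncated flow on each $\mathcal D_n$ via the finite comparison principle, and then pass to the limit $n\to\infty$ using stability and continuous dependence of the infinite flow.

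Once $\Phi_i\geq 0$ is known, $\dot s_i=-\Phi_i\leq 0$, so each $s_i(t)$ is nonincreasing in $t$ and bounded above by $s_i^0$. A uniform lower bound must then be established: if $s_i(t)\to -\infty$ for some $i$, then $k_i=e^{s_i}\to 0$, and a local analysis of the hyperbolic geometry of the faces incident to $v_i$ forces $T_i(s(t))$ eventually below $\hat T_i>0$, contradicting the preserved inequality. Hence each $s_i(t)$ converges monotonically to a finite limit $s_i^\infty\in\mathbb R$.

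Finally, since only finitely many variables $s_j$ enter the expression for $T_i$, continuity of $T_i$ in these variables gives $\Phi_i(t)\to T_i(s^\infty)-\hat T_i$ as $t\to\infty$. Integrating \eqref{eq:PCF} over $[0,\infty)$ yields
\[
\int_0^{\infty}\Phi_i(t)\,dt = s_i^0 - s_i^\infty < \infty,
\]
and a nonnegative integrand that possesses a limit and is summable at infinity must have limit $0$; hence $T_i(s^\infty)=\hat T_i$ for every $i\in V$, identifying $s^\infty$ with the target generalized circle packing metric. The main obstacle is the infinite-dimensional maximum principle step: without uniform bounds on vertex or face degrees one loses the usual coercivity of the Jacobian and must rely on the exhaustion-and-limit argument above, whose convergence and stability require careful justification.
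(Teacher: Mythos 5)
Your overall architecture coincides with the paper's: preserve the sign $T_i-\hat T_i\ge 0$ via a comparison principle on finite truncations, deduce that each $s_i(t)$ is nonincreasing, rule out $s_i\to-\infty$ because $k_i\to 0$ forces $T_i\to 0<\hat T_i$, and conclude $T_i(s^\infty)=\hat T_i$ from monotone convergence and continuity of $T_i$ in the finitely many relevant variables. Your final integration/limit argument is fine (and slightly more explicit than the paper's).

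The genuine gap is in how you transfer the comparison principle to the infinite flow. You first fix a global solution $s(t)$ supplied by the well-posedness theorem and then propose to import the inequality from the truncated flows ``using stability and continuous dependence of the infinite flow.'' No such stability or continuous-dependence result is available here: uniqueness of the infinite flow is only proved under bounded face degree \emph{and} uniformly bounded total geodesic curvatures, and in the generality of \cref{thm:convergenceI} (arbitrary infinite polygonal cellular decomposition) there is no reason an arbitrary solution should be approximated by the truncated flows at all. The correct move, and the one the paper makes, is to avoid comparing with a pre-chosen solution: since the theorem is an existence statement, take $s(t)$ to be precisely the solution constructed in \cref{thm:existence} as a diagonal Arzel\`a--Ascoli limit of the Dirichlet-truncated flows \eqref{eq:FPCF}. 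For each truncated flow, \cref{lem:MPI} (the interior system is closed because boundary vertices are frozen, and the Jacobian has nonpositive off-diagonal entries) gives $T_j(s^{[i]}(t))-\hat T_j\ge 0$ on $\mathrm{int}(V_i)\times[0,\infty)$, and since $T_j$ depends continuously on only finitely many coordinates, the inequality passes to the pointwise $C^1$ limit with no stability input whatsoever. With that substitution your proof closes. One smaller point: in the step ruling out $s_i\to-\infty$ you need the uniform version of the degeneration statement (as in \cref{rmk:limitZero}: $T_i<\epsilon$ whenever $k_i<\delta_i$, independently of the other curvatures), not merely the sequential limit lemma, since the neighboring $k_j$ vary along the flow; you gesture at this with ``a local analysis of the faces incident to $v_i$'' but it should be invoked explicitly.
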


	    With the help of Theorem \ref{thm:convergenceI}, we have the following corollary for generalized circle packing metrics on infinite triangulations.
        \begin{corollary}\label{smooth result}
            Let $\mathcal{T}=(V,E,F)$ be an infinite triangulation of a noncompact surface $S$. Then for each prescribed total curvatures $\hat{T}=(\hat{T_v})_{v\in V}\in \mathbb{R}^V_+$ satisfying 
            \begin{align}
                \hat{T}_v\le \mathrm{deg}(v),~\forall v\in V,
            \end{align}
            there exists a generalized circle packing $\mathcal{C}$ of $\mathcal{T}$ with the prescribed total geodesic curvatures $\hat{T}$. Moreover, all the circles are horocycles or hypercycles, which means the corresponding surface is a smooth surface with infinitely many geodesic boundaries or cusps.
        \end{corollary}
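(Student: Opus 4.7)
The plan is to invoke Theorem~\ref{thm:convergenceI} with the natural initial value $s^0\equiv 0$ (so that every circle $C_v$ is a horocycle) and then analyze the sign of the limiting geodesic curvatures.

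Taking $s^0\equiv 0$ makes every $k_v=1$, so the generalized circle packing associated with $s^0$ realizes each triangular face as an ideal triangle in $\mathbb{H}^2$ with the three horocycles at the ideal vertices mutually tangent. A classical computation shows that each horocycle arc inside such an ideal triangle has length exactly $1$; summing over the $\deg(v)$ faces incident to $v$ gives $T_v(s^0)=\deg(v)$. Combined with the hypothesis $\hat{T}_v\le\deg(v)$, this yields $T_v(s^0)-\hat{T}_v\ge 0$ for every $v\in V$, which is precisely the hypothesis of Theorem~\ref{thm:convergenceI}. Applying that theorem produces a solution $s(t)$ of the PCF converging to a generalized circle packing metric $s^\infty$ with total geodesic curvatures $\hat{T}$, establishing the existence part of the corollary.

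For the \emph{moreover} statement, I would argue that $s_v^\infty\le 0$ for every $v$. Since $\dot s_v=-(T_v-\hat{T}_v)$, it suffices to show that $T_v(s(t))-\hat{T}_v\ge 0$ is preserved along the trajectory; this monotonicity is a natural byproduct of the argument underlying Theorem~\ref{thm:convergenceI}, and on a triangulation it should follow from a maximum-principle argument applied to the evolution of $T_v-\hat{T}_v$. Once it is in hand, $s_v(t)$ is non-increasing, hence $k_v^\infty\le k_v^0=1$, so each limiting circle is either a horocycle ($k_v^\infty=1$) or a hypercycle ($k_v^\infty<1$). Geometrically, horocycles compactify to cusps while hypercycles bound neighborhoods of closed geodesic boundaries, and no cone singularities appear, which yields the claimed smooth hyperbolic surface with infinitely many cusps or geodesic boundaries.

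The main obstacle is the first step: making sense of the generalized circle packing metric at the geometrically degenerate value $s^0\equiv 0$---where each face is a genuine ideal triangle---and verifying the identity $T_v(s^0)=\deg(v)$ via the classical ideal-triangle horocycle computation. Once this initial identity is secured, existence follows immediately from Theorem~\ref{thm:convergenceI}, and the classification of the limit reduces to the sign-preservation property of the flow, which is essentially built into the variational/convexity structure used in the proof of Theorem~\ref{thm:convergenceI}.
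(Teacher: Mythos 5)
Your proposal is correct and follows essentially the same route as the paper: start the flow at $s^0\equiv 0$ (all horocycles), verify $T_{v,P}=1$ per triangular face so $T_v(s^0)=\deg(v)\ge\hat{T}_v$, apply Theorem~\ref{thm:convergenceI}, and use the maximum principle (Lemma~\ref{lem:MPI}, already embedded in the proof of Theorem~\ref{thm:convergenceI}) to keep $T_v-\hat{T}_v\ge 0$, hence $s_v(t)\le 0$ and the limit circles are horocycles or hypercycles. The ``degeneracy'' you worry about at $s^0\equiv 0$ is not an issue, since $k_v=1$ (horocycles) is an admissible value in the generalized circle packing framework, and your length-one horocycle-arc computation is exactly the paper's ``simple computation.''
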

    \begin{remark}
        Since the hyperbolic surfaces obtained in Corollary \ref{smooth result} is smooth, Theorem \ref{thm:convergenceI} provides a method to construct noncompact hyperbolic surfaces with infinite topological type. To the best of our knowledge, there is currently no other method to establish the existence of such surfaces that support infinite circle packings with prescribed total geodesic curvatures.
    \end{remark}
	\begin{theorem}[Convergence of the hyperbolic PCFs II]\label{thm:convergenceII}
		Let $\mathcal{D}= (V, E, F)$ be an infinite polygonal cellular decomposition of a noncompact surface $S$. Let $\{P_m\}_{m\in\mathbb{N}}$ be an enumeration of all faces in $F$ and $\{a_n\}_{n\geq 1}$ be a strictly increasing sequence of positive integers. Define $H_n=\cup_{m=0}^{a_n} V(P_m)$. For any finite nonempty subset $W \subset V$, we denote by $n_W$ the smallest integer such that $W \subset H_{n_W}$.
        
        Let $\hat{T}=(\hat{T}_i)_{i\in V}\in \mathbb{R}_{+}^{V}$ be a prescribed total geodesic curvature such that for every finite nonempty subset $W\subset V$, we have

	    \begin{equation}\label{eq:UpperBoundForLhat}
	        \sum_{w\in W} \hat{T}_w < \sum_{m=1}^{n_W} \pi \min\{N(P_m,W), N(P_m)-2\}. 
	    \end{equation}
         If $s^0$ is an initial value such that $T_i(s^0)-\hat{T}_i\leq 0$ for all $i\in V$, then there exists a solution $s(t)$ to the flow \eqref{eq:PCF} with the initial value $s^0$, which converges to a generalized circle packing metric with total geodesic curvatures $\hat{T}$.
        	\end{theorem}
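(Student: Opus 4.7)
The plan is to exhibit the desired metric as the monotone non-decreasing limit of the flow trajectory, using finite-complex generalized circle packings as uniform upper barriers, and then identify the limit via the integrated ODE. By the well-posedness theorem, a solution $s(t)$ on $[0,\infty)$ with initial data $s^0$ already exists, so only the convergence and the limit equation remain.

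I would first show that $T_i(s(t))\leq \hat T_i$ is preserved for all $i\in V$ and all $t\geq 0$. Generalized hyperbolic circle packings possess the standard monotonicity of $T_i$: strictly increasing in $s_i$ and monotone in each neighboring $s_j$ with the sign that makes the linearized system at $u_i(t)=T_i(s(t))-\hat T_i$ cooperative. Combined with the ODE $\dot s_i=-u_i$, a parabolic maximum principle—applied first on finite truncations and then passed to the limit, as is standard in this setting—shows that the negative cone $\{u\leq 0\}$ is forward invariant. Hence $\dot s_i(t)\geq 0$ and each $s_i(t)$ is non-decreasing.

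The central step is a pointwise upper bound. For each $n$, let $\mathcal{D}_n=(H_n,E_n,F_n)$ be the finite sub-decomposition consisting of the faces $P_0,\ldots,P_{a_n}$. Hypothesis \eqref{eq:UpperBoundForLhat}, restricted to subsets $W\subseteq H_n$, is exactly the admissibility condition of the finite-existence theorem from \cite{HQSZ}, and therefore produces a generalized circle packing metric $\bar s^{(n)}$ on $\mathcal{D}_n$ with $T_i(\bar s^{(n)})=\hat T_i$ for every $i\in H_n$. For any fixed $i\in V$ and $n$ large enough that the closed star of $i$ in $\mathcal{D}$ lies inside $H_n$, the curvature $T_i$ is computed identically on $\mathcal{D}$ and on $\mathcal{D}_n$, so $\bar s^{(n)}$ is a stationary solution of the infinite flow at $i$. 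A cooperative maximum principle comparison—handled carefully near $\partial H_n$ by noting that face-removals only affect $T_j$ for $j$ within the deleted faces—then yields $s_i(t)\leq \bar s^{(n)}_i$ for all sufficiently large $n$, after arranging $s^0\leq \bar s^{(n)}$ by an additive constant if necessary. This gives the desired ceiling.

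Combining monotonicity with the ceiling, $s_i(t)\uparrow s_i^\infty\in\mathbb R$ for every $i\in V$, and $T_i(s(t))\to T_i(s^\infty)$ by continuity of $T_i$ in the finitely many coordinates of the closed star of $i$. Integrating the ODE gives $s_i^\infty-s_i^0=\int_0^\infty(\hat T_i-T_i(s(t)))\,dt<\infty$ with a non-negative integrand, which together with the existence of $\lim_{t\to\infty}T_i(s(t))$ forces $T_i(s^\infty)=\hat T_i$. The main obstacle is the barrier construction in the third paragraph: one must verify that \eqref{eq:UpperBoundForLhat}—whose per-face ceiling $\pi\min\{N(P_m,W),N(P_m)-2\}$ encodes both the local valence and the sharp bound per face—does supply the admissibility hypothesis of \cite{HQSZ} on every truncation $\mathcal{D}_n$, and that the comparison is valid in the direction forced by the cooperative structure despite boundary effects. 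Once the barriers are in place, the convergence argument proceeds along the same monotone lines as in \cite{GeHuaZhou}.
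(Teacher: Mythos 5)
Your first and last steps coincide with the paper's: the monotonicity $T_i(s(t))\le \hat T_i$ (hence $\dot s_i\ge 0$) is obtained exactly as in the paper, by the maximum principle \cref{lem:MPI} on the truncated flows \eqref{eq:FPCF} and passing to the limit along the solution constructed in \cref{thm:existence}, and the identification $T_i(s^\infty)=\hat T_i$ from convergence of $s_i(t)$ is the same. The genuine gap is your central step, the upper barrier. First, the finite existence theorem of \cite{HQSZ} is stated for finite polygonal cellular decompositions, and the truncation $\mathcal{D}_n$ obtained from an arbitrary enumeration $P_0,\dots,P_{a_n}$ of faces need not be one: boundary vertices may meet fewer than three edges inside the truncation, and the union of these faces need not be a surface with the required cell structure; it is also not checked that the restriction of \eqref{eq:UpperBoundForLhat} really is the admissibility hypothesis there (you flag this yourself). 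Second, and more seriously, even granting the barriers $\bar s^{(n)}$, the comparison $s_i(t)\le \bar s^{(n)}_i$ is not justified. The barrier is defined only on $H_n$, so a comparison principle for the flow on the infinite complex needs either an extension of $\bar s^{(n)}$ to all of $V$ that is a supersolution ($T_j\ge \hat T_j$) at \emph{every} vertex --- which is essentially as hard as the existence problem itself, since raising $s$ at vertices outside $H_n$ decreases $T_j$ at their neighbors, so one cannot simply put large values outside --- or else control $s(t)\le \bar s^{(n)}$ on $\partial H_n$ for all time, which is part of what is to be proved. The remark that deleting faces only lowers $T_j$ for $j\in H_n$ helps in the favorable direction at those vertices but says nothing outside $H_n$. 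Moreover, the infinite maximum principle available in the paper (Maximum principle II) needs uniform weight bounds (bounded face degree, bounded curvature), which are not hypotheses of this theorem.

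The paper's proof avoids barriers altogether and uses \eqref{eq:UpperBoundForLhat} directly through a blow-up contradiction: since each $s_i(t)$ is nondecreasing, if some $s_i(t)\to+\infty$, pick $N$ with $i\in H_N$ and let $W$ be the set of vertices of $H_N$ whose $s$-coordinates blow up; the remaining coordinates on the finitely many faces $P_m$ building $H_N$ converge by monotonicity and boundedness, so \cref{lem:limitInfty} applies face by face and gives $\lim_{t\to\infty}\sum_{w\in W}T_w(t)\ge \sum_m \pi\min\{N(P_m,W),N(P_m)-2\}>\sum_{w\in W}\hat T_w$, contradicting $T\le\hat T$ along the flow. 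This soft argument is exactly what the hypothesis \eqref{eq:UpperBoundForLhat} is tailored for, and it requires no finite-complex existence result and no comparison with stationary solutions. To repair your proof you would either have to carry out this contradiction argument (at which point the barriers are unnecessary) or construct a genuine global supersolution, which the hypotheses do not obviously provide.
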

Here, the notations $V(P),~N(P)$ and $N(P,W)$ in Theorem \ref{thm:convergenceII} are defined as follows.

        \begin{definition}
        Let $V(P)$ denote the vertex set of a face $P$. For a face $P \in F$, we denote by $N(P)$ the number of vertices in $V(P)$. Given a finite subset $W \subset V$ and a face $P\in F$, the number $N(P, W)$ is given by $$N(P, W) = \#\{ v \in W \mid v \in V(P) \}.$$
         \end{definition}

        In Section~\ref{sec:4}, we will show that for any $\hat{T} \in \mathbb{R}_+^V$, if the initial value $s^0$ is sufficiently small, then $T_i(s^0) - \hat{T}_i \leq 0$ for all $i \in V$. Therefore, from \cref{thm:convergenceII}, we can derive the following corollary.
        
         \begin{corollary}\label{cor:convergenceII}
            Let $\mathcal{D}= (V, E, F)$ be an infinite polygonal cellular decomposition of a noncompact surface $S$. Suppose $\hat{T}=(\hat{T}_i)_{i\in V}\in \mathbb{R}_{+}^{V}$ is a prescribed total geodesic curvature that satisfies \eqref{eq:UpperBoundForLhat} for every finite nonempty set $W\subset V$, then there exists a generalized circle packing metric with the prescribed total geodesic curvature $\hat{T}$.
         \end{corollary}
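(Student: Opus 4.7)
The plan is to reduce Corollary \ref{cor:convergenceII} directly to Theorem \ref{thm:convergenceII}. Since Theorem \ref{thm:convergenceII} already guarantees convergence of the flow \eqref{eq:PCF} to a generalized circle packing metric realizing $\hat{T}$ whenever $\hat{T}$ satisfies the structural inequality \eqref{eq:UpperBoundForLhat} and the initial value $s^0$ satisfies $T_i(s^0)-\hat{T}_i\leq 0$ for every $i\in V$, the only remaining task is to produce such an initial value.

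To construct $s^0$, I would take each component $s^0_i$ very negative, so that the geodesic curvature $k_i=e^{s^0_i}$ is small at every vertex. The key geometric input is that, in hyperbolic background geometry, as $k_i\to 0^+$ the circle $C_i$ degenerates toward a geodesic and the arcs of $C_i$ lying in each incident face have controlled length; consequently $T_i$, being a sum of $k_i$ times such arc lengths, tends to zero. Verifying this uniformly at each vertex is precisely the claim in the remark preceding the corollary, to be carried out in Section \ref{sec:4}. Once it is in hand, the positivity of each $\hat{T}_i$ lets one choose $s^0$ small enough that $T_i(s^0)\leq \hat{T}_i$ holds at every $i\in V$ simultaneously.

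Applying Theorem \ref{thm:convergenceII} with this initial value then produces a solution $s(t)$ of the flow \eqref{eq:PCF} that converges to a generalized circle packing metric with prescribed total geodesic curvatures $\hat{T}$, which is exactly the conclusion sought.

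The only real obstacle in this plan is the construction of $s^0$: each $T_i$ depends not only on $s^0_i$ but also on the log-curvatures at neighboring vertices through the packing data of each incident face, so one must check that driving every $s^0_j$ sufficiently negative forces every $T_i$ to zero without spurious cancellations between faces. This reduces to a local hyperbolic-geometry computation on each polygon, and is the technical input that Section \ref{sec:4} will supply; once that fact is established, the corollary is immediate from Theorem \ref{thm:convergenceII}.
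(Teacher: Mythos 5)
Your proposal matches the paper's proof: the corollary is reduced to Theorem \ref{thm:convergenceII} by constructing an initial value $s^0$ with $T_i(s^0)\leq \hat{T}_i$ for all $i$, and the paper supplies exactly the input you defer to, namely Remark \ref{rmk:limitZero}, which says $T_i$ becomes arbitrarily small once $k_i$ is small, \emph{independently} of the neighboring curvatures, so one may set $s^0_i=\ln\delta_i$ vertex by vertex. The interdependence issue you flag is thus resolved by that remark, and your argument is essentially the same as the paper's.
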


        To formulate further existence results in certain special cases, we introduce the notion of a simple decomposition.
        \begin{definition}
            Let $\mathcal{D}= (V, E, F)$ be an infinite polygonal cellular decomposition. Fix a vertex $x_0\in V$. Define $B_0=\{x_0\}$, $B_{n+1}=\bigcup_{P\in F:V(P)\cap B_n\neq\varnothing} V(P)$ for $n\geq 0$ and let $\partial B_n=\{x\in B_n: \exists y\notin B_n, y\sim x\}$. We say $\mathcal{D}$ is a simple decomposition if it satisfies the following conditions:
            \begin{enumerate}[(1)]
                \item $B_{n+1} = B_n \cup \partial B_{n+1}$ for each $n \geq 0$.
                \item  $\sup_{x \in V} \deg(x)$ and $\sup_{P \in F} N(P)$ are finite.
                \item $\mathcal{D}$ has polynomial growth, that is, $|B_n| \leq Cn^k$ for some constants $C$ and $k$.
            \end{enumerate}
        \end{definition}

        The properties defined above hold for a large class of infinite cellular decompositions of surfaces, such as the square grid decomposition of the plane $\mathbb{R}^2$, as well as the tessellations of regular triangles and regular hexagons.
        
	    \begin{theorem}\label{thm:SimpleDecomposition}
	        Let $\mathcal{D}= (V, E, F)$ be a simple infinite polygonal cellular decomposition of a noncompact surface $S$. Suppose $\hat{T}=(\hat{T}_i)_{i\in V}\in \mathbb{R}_{+}^{V}$ is a prescribed total geodesic curvature such that for some fixed constants $\epsilon,\delta>0$ and every finite nonempty vertex set $W\subset V$ the inequality
                \begin{equation}\label{eq:SimpleDecomposition}
                    \sum_{w\in W}\hat{T}_w<(1-\frac{\delta}{n_W^{(1-\epsilon)}})\sum_{P} \pi \min\{N(P,W), N(P)-2\}
                \end{equation}
	        holds, where $n_W$ is the minimal integer such that $W\subset B_{n_W}$. Then there exists a generalized circle packing metric with the prescribed total geodesic curvature $\hat{T}$.
	    \end{theorem}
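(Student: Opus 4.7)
The plan is to deduce \cref{thm:SimpleDecomposition} from \cref{cor:convergenceII} by constructing an explicit enumeration of the faces together with a strictly increasing sequence $\{a_n\}$ for which the hypothesis \eqref{eq:UpperBoundForLhat} follows from \eqref{eq:SimpleDecomposition}. To do this I would exploit the ball exhaustion $\{B_n\}$ at the basepoint $x_0$: assign to each face $P$ the level $\ell(P)=\min\{n\ge 0 : V(P)\cap B_n\neq\varnothing\}$, and enumerate the faces $P_0,P_1,\ldots$ in non-decreasing order of level, with ties broken arbitrarily. Writing $M_n$ for the number of faces of level at most $n$, the defining recursion of $B_{n+1}$ gives $V(P_0)\cup\cdots\cup V(P_{M_n-1})=B_{n+1}$, and the simple-decomposition hypotheses (bounded vertex and face degree together with $|B_n|\le Cn^k$) imply a polynomial bound $M_n\le C' n^k$.

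Next I would choose $\{a_n\}$ so that $H_n$ coincides with or slightly overshoots $B_{n+1}$; for instance the choice $a_n=M_n-1$ gives $H_n=B_{n+1}$, so that for a finite $W\subset V$ with $n_W^{\mathrm{ball}}=n^*$ the index $n_W$ from \cref{thm:convergenceII} is comparable to $n^*$. With the level-based enumeration the truncated sum from \eqref{eq:UpperBoundForLhat} then runs essentially over all faces of level at most $n^*$; since every face $P$ with $V(P)\cap W\neq\varnothing$ has level at most $n^*$, the truncated sum captures everything except contributions from level-$n^*$ faces incident to $W\cap(B_{n^*}\setminus B_{n^*-1})$.

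The technical heart of the argument is an estimate bounding the resulting shortfall
\[
\sum_P \pi\min\{N(P,W),N(P)-2\}\;-\;\sum_{m\le a_{n_W}}\pi\min\{N(P_m,W),N(P_m)-2\}
\]
by $\frac{\delta}{n_W^{1-\epsilon}}\sum_P\pi\min\{N(P,W),N(P)-2\}$; combined with \eqref{eq:SimpleDecomposition} this immediately yields \eqref{eq:UpperBoundForLhat}. The ingredients are the polynomial growth $|B_n|\le Cn^k$ (to count the boundary-level faces) together with a uniform lower bound $\sum_P\pi\min\{N(P,W),N(P)-2\}\gtrsim |W|$ coming from bounded face size and bounded vertex degree. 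The main obstacle I anticipate is making this ratio estimate uniform in $W$: a direct count only produces a constant ratio when $W$ is concentrated in a single ball shell, so the enumeration together with $\{a_n\}$ probably has to be tuned so that each step absorbs several ball-shells at once—for instance $a_n = M_{g(n)}$ with a carefully chosen super-linear $g$—so that polynomial growth of $|B_n|$ converts the boundary contribution into the required $O(1/n_W^{1-\epsilon})$ fraction of the total. Balancing this growth rate of $a_n$ against the slack $\delta/n_W^{1-\epsilon}$ in \eqref{eq:SimpleDecomposition} is the technically delicate part of the argument.
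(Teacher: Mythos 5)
Your overall strategy---verifying the hypothesis \eqref{eq:UpperBoundForLhat} of \cref{thm:convergenceII} from \eqref{eq:SimpleDecomposition} by a clever choice of enumeration and of $\{a_n\}$---cannot be completed: the ``shortfall'' estimate that you yourself single out as the technical heart is not just delicate, it is false for every choice of enumeration and $\{a_n\}$. The obstruction is the frontier of the enumerated region. For any enumeration $\{P_m\}$ and any strictly increasing $\{a_n\}$, the finite union $U_n=\bigcup_{m\le a_n}\overline{P_m}$ is a proper closed subregion of $S$, so it has boundary vertices, and any boundary vertex of some $U_n$ is a vertex $w$ whose full star of faces was \emph{not} enumerated at the stage $n_w$ at which $w$ first entered $H_{n_w}$ (otherwise $w$ would be interior to $U_{n_w}\subseteq U_n$). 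Since every fixed vertex is eventually interior, infinitely many such ``bad'' vertices occur, and hence bad vertices $w$ with arbitrarily large ball radius $r_w$ exist. Now test $W=\{w\}$ for such a $w$: the truncated sum in \eqref{eq:UpperBoundForLhat} is at most $\pi(\deg(w)-1)$, while \eqref{eq:SimpleDecomposition} permits $\hat T_w$ arbitrarily close to $\bigl(1-\delta/r_w^{1-\epsilon}\bigr)\pi\deg(w)$, which exceeds $\pi(\deg(w)-1)$ once $\delta\deg(w)/r_w^{1-\epsilon}<1$ (bounded degree, $r_w$ large). Choosing $\hat T$ large at one far-away bad vertex and tiny elsewhere gives a prescription satisfying \eqref{eq:SimpleDecomposition} for every finite $W$ but violating \eqref{eq:UpperBoundForLhat} at $W=\{w\}$. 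So the slack $\delta/n_W^{1-\epsilon}$, which tends to $0$, can never absorb even a single missing face in the star of a frontier vertex, and no tuning of $a_n$ (block sizes, super-linear $g$, etc.) repairs this; \cref{thm:SimpleDecomposition} is genuinely not a formal corollary of \cref{cor:convergenceII}.

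The paper instead reruns the flow argument and uses \eqref{eq:SimpleDecomposition} only along ball truncations of the blow-up set, which is where all three defining properties of a simple decomposition enter. Starting from an initial value with $T(s^0)\le\hat T$, the solution is componentwise nondecreasing with $T(s(t))\le\hat T$; let $W$ be the set of vertices with $s_i(t)\to\infty$ and $W_n=W\cap B_n$. Applying \cref{lem:limitInfty} only to faces entirely contained in $B_n$ gives $\sum_{w\in W_n}\hat T_w\ \ge\ \sum_{P:V(P)\subset B_n}\pi\min\{N(P,W_n),N(P)-2\}$, and comparing with \eqref{eq:SimpleDecomposition} for $W_n$ shows that the faces meeting $B_n$ but not contained in it must carry a $\delta/n^{1-\epsilon}$ fraction of the total. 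Bounded face and vertex degrees convert this into the isoperimetric-type bound $|W\cap\partial B_n|\ \ge\ C\,\delta\,n^{-(1-\epsilon)}|W\cap B_n|$, and iterating this along radii $2^kN$ forces $|W\cap B_n|$ to grow faster than any polynomial, contradicting $|B_n|\le Cn^k$; hence $W=\varnothing$ and the flow converges. In other words, the boundary-face shortfall that kills your reduction is not made small---it is turned into a quantitative lower bound on the boundary of the blow-up set, and polynomial volume growth supplies the contradiction. If you want to salvage your write-up, you should abandon the hypothesis-to-hypothesis implication and instead adapt the blow-up analysis in this way.
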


        The inequality~\eqref{eq:SimpleDecomposition} in \cref{thm:SimpleDecomposition} can be regarded as an approximate analog of the existence criterion in the finite case, see~\cite[Theorem 1.9]{HQSZ}.
        
        The organization of the paper is as follows. In Section \ref{sec:2}, we recall some basic definitions of generalized circle packing metrics on polygonal cellular decompositions, which were first introduced in \cite{HQSZ}. 
        In Section \ref{sec:3}, we establish the long-time existence of solutions to the flow \eqref{eq:PCF} with arbitrary initial values and then prove the uniqueness of the solution with bounded total geodesic curvatures. In Section \ref{sec:4}, we prove two convergence results of the flow \eqref{eq:PCF} under certain conditions and introduce several natural conditions on the prescribed total geodesic curvatures to ensure the existence of corresponding generalized circle packing metrics.

	\section{Preliminaries}\label{sec:2}
	\subsection{Generalized circle packings on polygons}
	We consider generalized circle packings in hyperbolic geometry, where we use ``generalized circles'' instead of ordinary hyperbolic circles. In this paper, generalized circles include the following three types of ``circles'', which are curves with constant geodesic curvatures in the hyperbolic space $\mathbb{H}^2$, as shown in Figure \ref{fig:cycles}.
	\begin{figure}
                \centering
                \includegraphics[width=0.43\linewidth]{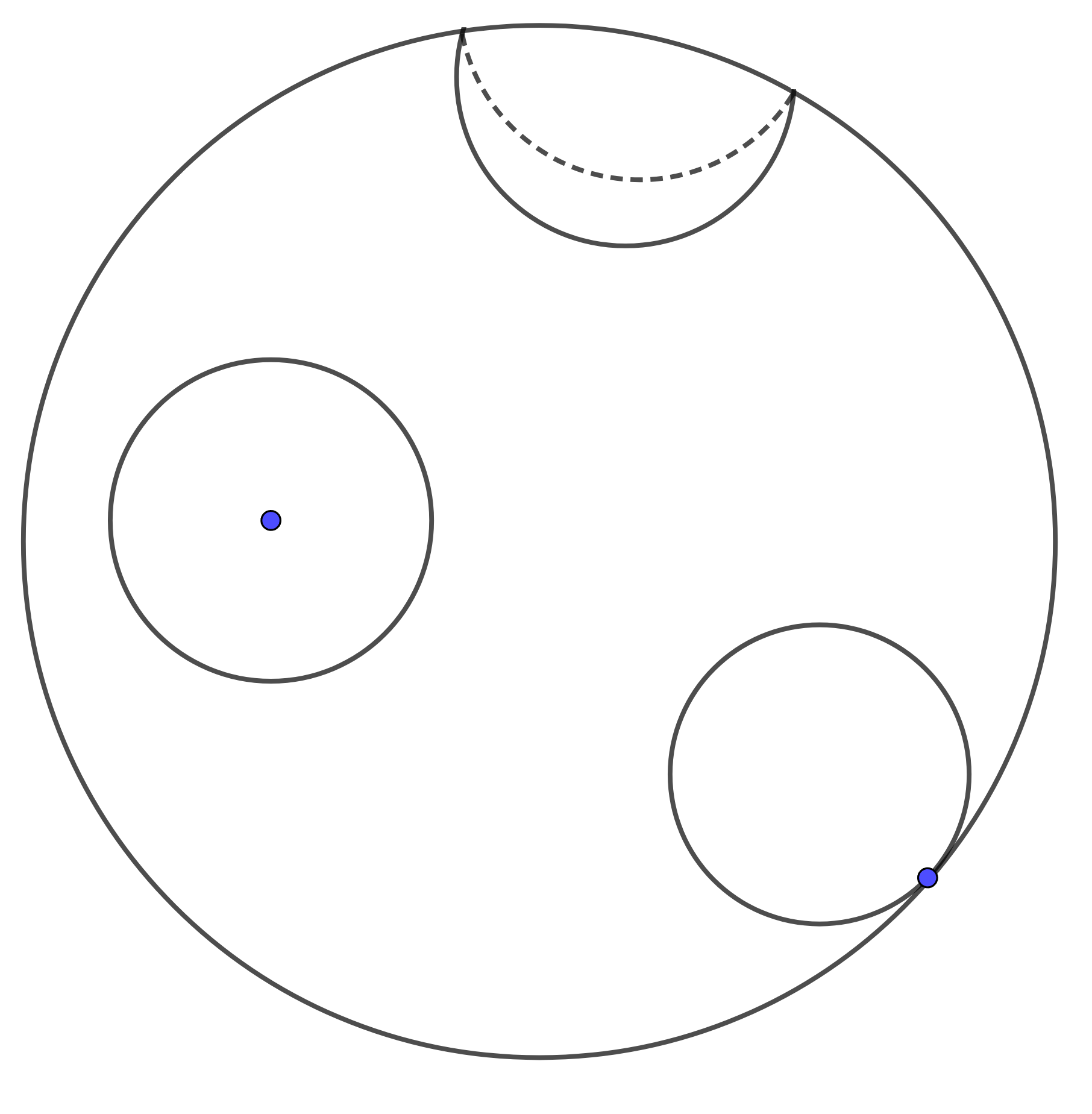}
                \caption{Circles, horocycles and hypercycles in the hyperbolic space $\mathbb{H}^2$.}
                \label{fig:cycles}
            \end{figure}
	\begin{enumerate}
	    \item A \textbf{hyperbolic circle} is a standard circle in the hyperbolic plane, which appears as an Euclidean circle entirely contained in the Poincar\'e disk model.
        \item A \textbf{hyperbolic horocycle} can be regarded as a circle with ``infinite radius''. Suppose $w \in \mathbb{H}^2$, and let $l$ be a geodesic ray starting at $w$. Let $\{p_i\}_{i=1}^\infty$ be a sequence of points on $l$ tending to infinity, and let $C_i$ be the hyperbolic circle centered at $p_i$ and passing through $w$. Then the limit of these hyperbolic circles $C_i$ is called a horocycle.
        
        In the Poincar\'e disk model, a hyperbolic horocycle is a Euclidean circle in the unit disk $\mathbb{D}^2$ which is tangent to its boundary $\partial\mathbb{D}^2$. In convention, a point on $\partial \mathbb{D}$ is called an ideal point. In this paper, we call this tangent point \textbf{the center of the horocycle}.  
		
		\item A \textbf{hypercycle} is a curve equidistant from a geodesic. Let $\gamma(t)$ be a complete geodesic in $\mathbb{H}^2$, that is, a geodesic defined for all real parameters $t\in\mathbb{R}$. The hypercycle with axis $\gamma$ and radius $r$ is a connected component of the set
		$$
		C(\gamma, r) = \{z \in \mathbb{H}^2 : d(z,\gamma) = r\},
		$$
		where $d(z, \gamma)$ is the distance from $z$ to $\gamma$. 
        
        In the Poincaré disk model, let $C$ be a Euclidean circle that intersects both the interior and the exterior of the unit circle. We denote by $C_1$ the portion of $C$ that lies within the unit disk $\mathbb{D}^2$. If $C_1$ is not a geodesic, then it is a hypercycle. \textbf{In this paper, we refer to the axis $\gamma$ of the hypercycle $C$ as its center, treating this curve as an abstract point when discussing generalized circle packings.} For an arc $\beta$ on a hypercycle $C(\gamma,r)$, we call the length of the projection of $\beta$ on the geodesic $\gamma$ the \textbf{central angle} of the arc $\beta$.
	\end{enumerate}

	We now define the generalized circle packing on polygons, which was first introduced in \cite{HQSZ}.
	
	\begin{definition}[Generalized circle packing on polygon \cite{HQSZ}]
		\label{def:gcp}
		Let $P$ be a topological polygon with $n \geq 3$ vertices $v_1, \dots, v_n$, arranged in clockwise order. Let $C_1, \dots, C_n$ be $n$ generalized circles in $\mathbb{H}^2$ corresponding to the vertices $v_1, \dots, v_n$. The configuration formed by $C_1, \dots, C_n$ is called a \textbf{generalized circle packing} of the polygon $P$ if the following conditions hold (see Figure \ref{fig:1}):
        \begin{enumerate}
          \item[\textnormal{(i)}] The generalized circles $C_i$ and $C_{i+1}$ are tangent to each other, where $1\leq i\leq n$ with the convention that $i + 1 = 1$ when $i = n$.
          \item[\textnormal{(ii)}] There exists a hyperbolic circle $C_P$ that is orthogonal to each $C_i$ at their tangency points, which is called the \textbf{dual circle} of the generalized circle packing.
        \end{enumerate}
	\end{definition}
	
	\begin{remark}
		Given a topological polygon $P$ with more than three vertices, if condition (ii) is not satisfied, then the configuration formed by the tangent circles is not unique up to isometry.
	\end{remark}
	
	The following lemma from \cite{HQSZ} shows the existence of generalized circle packings on polygons with prescribed geodesic curvatures.
	
	\begin{lemma}[\cite{HQSZ}]
		\label{lem:curvature}
		Given a topological polygon $P$ with vertices $v_1, \dots, v_n$, let $k_i \in \mathbb{R}^+$, $i = 1, \dots, n$. Then there exists a generalized circle packing $\{C_i\}_{i=1}^n$ of $P$ with the geodesic curvature $k_i$ on each $C_i$. Moreover, let $k_P$ be the geodesic curvature of the dual circle $C_P$, it turns out that $k_P$ is a $C^1$ continuous function of $(k_1, \dots, k_n)$.
	\end{lemma}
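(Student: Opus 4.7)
The plan is to reduce the existence problem for a generalized circle packing of $P$ with prescribed curvatures $k_1,\dots,k_n$ to a single one-dimensional closing-up equation in the unknown curvature $\kappa$ of the dual circle $C_P$, and to obtain the $C^1$-dependence from the implicit function theorem applied to that equation.

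First I would place a candidate dual circle $C_P$ of geodesic curvature $\kappa>1$ at the origin of the Poincar\'e disk, so that $C_P$ is both hyperbolically and Euclideanly centred at $0$. For any point $q\in C_P$ and any $k>0$ there is a unique generalized circle of geodesic curvature $k$ that meets $C_P$ orthogonally at $q$ and lies on the outside of $C_P$; it intersects $C_P$ at a second point $q'$. Let $\alpha(\kappa,k)$ denote the hyperbolic central angle at the centre of $C_P$ subtended by the arc from $q$ to $q'$. Using the right-angled hyperbolic triangle (or the appropriate right-angled Lambert quadrilateral in the horocyclic and hypercyclic cases) with vertices at the centre of $C_P$, at $q$, and at the hyperbolic centre / axis / ideal centre of the outer circle, I would derive a closed-form expression for $\alpha(\kappa,k)$ and verify that it is smooth in both variables on $(1,\infty)\times(0,\infty)$ and extends analytically across $k=1$.

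Starting from any base point $q_0\in C_P$ and iterating this construction, one obtains circles $C_1,\dots,C_n$ of the prescribed geodesic curvatures whose consecutive tangencies lie on $C_P$; the orthogonality to $C_P$ on both sides of each $q_i$ automatically forces $C_i$ and $C_{i+1}$ to share a tangent line at $q_i$, so they are tangent, which is condition~(i) of \cref{def:gcp}. The only remaining requirement is the closing-up condition $q_n=q_0$, which becomes
\begin{equation*}
\Phi(\kappa):=\sum_{i=1}^{n}\alpha(\kappa,k_i)=2\pi.
\end{equation*}
I would then show that $\alpha(\cdot,k)$ is strictly increasing in $\kappa$, with $\alpha(\kappa,k)\to 0$ as $\kappa\to 1^+$ (the dual circle degenerates toward a horocycle and the outer circle meets it in two nearly coincident points) and $\alpha(\kappa,k)\to\pi$ as $\kappa\to\infty$ (the dual circle collapses to a point and the outer circle behaves like a geodesic through it). Hence $\Phi:(1,\infty)\to(0,n\pi)$ is a continuous strictly increasing surjection, and since $n\geq 3$ the intermediate value theorem yields a unique $k_P\in(1,\infty)$ solving $\Phi(k_P)=2\pi$.

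For the $C^1$ regularity of $k_P$ in $(k_1,\dots,k_n)$, I would apply the implicit function theorem to $F(\kappa,k_1,\dots,k_n):=\Phi(\kappa)-2\pi$; the strict monotonicity of $\Phi$ in $\kappa$ guarantees $\partial_\kappa F\neq 0$. I expect the main technical obstacle to be the uniform analysis of $\alpha(\kappa,k)$ across the three regimes $k>1$, $k=1$, $k<1$: one must verify that a single hyperbolic-trigonometric identity governs all three cases so that $\alpha$ is genuinely smooth in $k$, and that the claimed monotonicity and limits in $\kappa$ persist irrespective of whether the outer circle is a hyperbolic circle, a horocycle, or a hypercycle.
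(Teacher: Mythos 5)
This lemma is not proved in the paper itself—it is quoted from \cite{HQSZ}—and your proposal reconstructs essentially the argument used there: parametrize the configuration by the curvature $\kappa$ of the dual circle, impose the closing condition that the central angles at the center of $C_P$ sum to $2\pi$, and conclude via strict monotonicity in $\kappa$ (intermediate value theorem for existence of $k_P$, implicit function theorem for its $C^1$ dependence on $(k_1,\dots,k_n)$). The technical point you flag does work out uniformly: a Poincar\'e disk computation gives $\alpha(\kappa,k)=2\arctan\bigl(\sqrt{\kappa^2-1}/k\bigr)$ for all $k>0$, covering circles, horocycles and hypercycles by one formula, which is smooth, strictly increasing in $\kappa$, and tends to $0$ as $\kappa\to 1^{+}$ and to $\pi$ as $\kappa\to\infty$, consistent with the expressions for $\alpha_{i,P}$ and $T_{i,P}$ recorded in Section~\ref{sec:2}.
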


        \begin{figure}[htbp]
        \centering
          \includegraphics[width=0.6\textwidth]{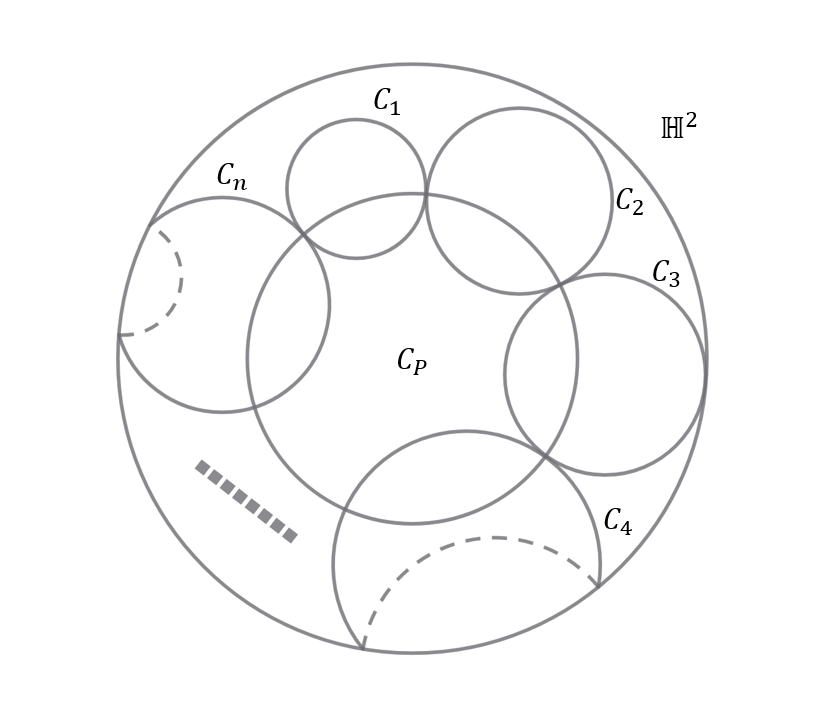} 
          \caption{A generalized circle packing.}
          \label{fig:1}
        \end{figure}
    
     We denote by $k_i$ the geodesic curvature of the circle $C_i$. We can classify the type of associated circle according to the value of geodesic curvature:
    \begin{enumerate}
        \item[(1)] $k_i>1$, the associated circle is a hyperbolic circle.
        \item[(2)] $k_i=1$, the associated circle is a horocycle.
        \item[(3)] $0<k_i<1$, the associated circle is a hypercycle.
    \end{enumerate}

	The associated radius of $C_i$ can be computed from its geodesic curvature as follows.
	$$
	r(k)=\left\{\begin{array}{ll}
		\operatorname{arctanh} k,  & 0<k<1, \\
		+\infty,  &k=1,  \\
		\operatorname{arccoth} k, & k>1.
	\end{array}\right.
	$$
	
	We now use $v_i$ to denote the center of $C_i$, and define the distance from $v_i$ to $v_j$ as $d(v_i, v_j) = r(k_i) + r(k_j)$. For each $1\leq i\leq n$, let $\gamma_{i,i+1}$ be the (possibly infinite) geodesic in $\mathbb{H}^2$ connecting $v_i$ and $v_{i+1}$ that realizes the distance from $v_i$ to $v_{i+1}$, where we again choose $i+1=1$ when $i=n$. Note that when $C_i$ is a hyperbolic circle and $C_{i+1}$ a hypercycle, $\gamma_{i,i+1}$ would be a geodesic starting from the point $v_i$ and perpendicular to the axis $v_{i+1}$. 
	
	\begin{definition}
		The hyperbolic polygon $\tilde{P}$ bounded by $\gamma_{i,i+1}$ and $v_i$ ($1\leq i\leq n$) is called the \textbf{generalized hyperbolic polygon} of a \textbf{generalized circle packing} with geodesic curvatures ${k} = (k_{v_1}, \dots, k_{v_n})$. 
	\end{definition}
For example, the region enclosed by the blue dashed curves together with the axis of $C_2$ in Figure \ref{fig:2} is a generalized hyperbolic pentagon with the generalized circle packing metric.

        \begin{figure}[htbp]
        \centering
          \includegraphics[width=0.6\textwidth]{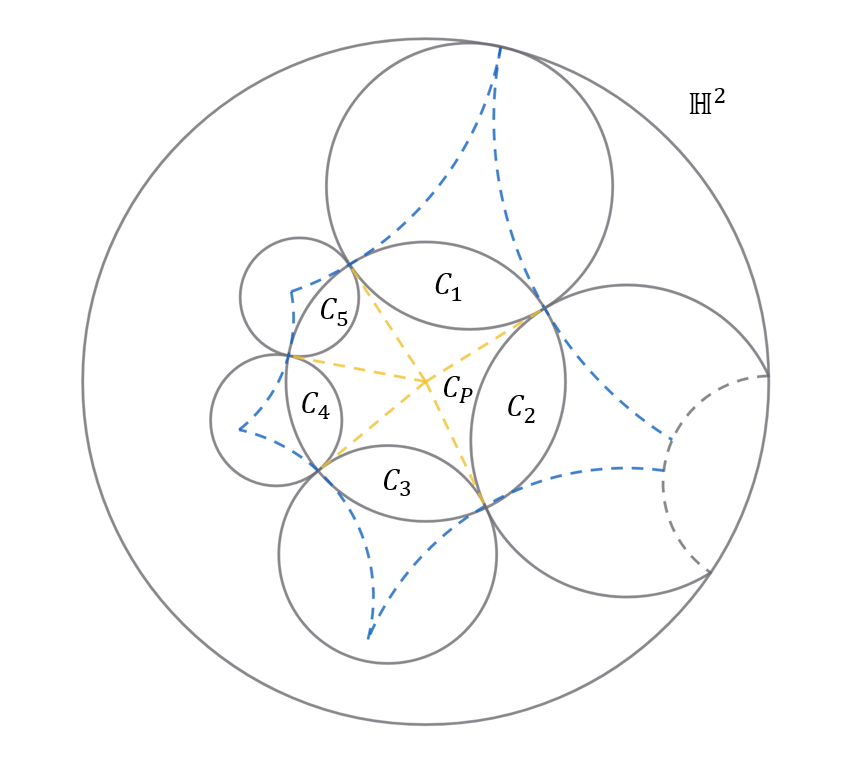} 
          \caption{A generalized pentagon of a generalized circle packing.}
          \label{fig:2}
        \end{figure}

		\subsection{Generalized Circle Packings on Surfaces}

		Let $\Sigma$ be the cellular decomposition of $S$. We denote by $V$, $E$ and $F$ the set of 0-cells, 1-cells and 2-cells of $\Sigma$, respectively.
		
		\begin{definition}[Infinite polygonal cellular decomposition]\label{def:polygonal}
			An infinite cellular decomposition $\Sigma$ is called an \textbf{infinite polygonal cellular decomposition}, if it satisfies:
			\begin{enumerate}
			    \item[(1)] Every 2-cell of $\Sigma$ is a polygon.
				\item[(2)] The number of 1-cells each 0-cell meets is finite and at least three.
				\item[(3)] The 1-skeleton is a simple graph without loops or double edges.
			\end{enumerate}
		\end{definition}

		Now we consider the generalized circle packing on surfaces with boundary in the hyperbolic background geometry. For simplicity, we make several conventions of the notations.
		\begin{enumerate}
			\item Use one index to denote a vertex ($i \in V$).
			\item Use two indices to denote an edge ($ij \in E$).
			\item Use $m$ indices to denote a face ($P = i_1i_2\cdots i_m \in F$ is the polygon on $S$ bounded by $i_1i_2, i_2i_3, \dots, i_mi_1$).
		\end{enumerate}
		
		\begin{definition}[Generalized hyperbolic circle packing]\label{def:circle-packing}
			Given the geodesic curvature ${k} = (k_i)_{i\in V} \in \mathbb{R}_+^{V}$ on $V$, we can construct a hyperbolic surface $\tilde{S} = \tilde{S}({k})$ as follows:
			\begin{itemize}
				\item[(1).] For each face $P=i_1i_2,...,i_m \in F$, we construct a generalized polygon $\tilde{P}$ of generalized circle packing with the geodesic curvature $(k_{i_1}, \dots, k_{i_m})$.
				\item[(2).] Gluing all the generalized polygons together by pairing of edges, with tangent coincidence, we obtain a hyperbolic surface $\tilde{S} = \tilde{S}({k})$, whose metric is called a \textbf{generalized circle packing metric}.
			\end{itemize}
		\end{definition}
		
		\begin{remark}\label{rem:partition}
			For any ${k} > 0$, there is a partition of $V$ given by
			\begin{align*}
				V_{hyp} &:= \{ i \in V \mid 0 < k_i < 1 \}, \\
				V_{horo} &:= \{ i \in V \mid k_i = 1 \}, \\
				V_{circle} &:= \{ i \in V \mid k_i > 1 \}.
			\end{align*}
			Topologically, the surface $\tilde{S} = \tilde{S}({k})$ can be obtained by removing disjoint open disks (or half of the open disks at the boundary) containing $V_{hyp}$ and points $V_{horo}$ from $S$. 
		\end{remark}


		Next, we give the definition of the total geodesic curvature.
		
		\begin{definition}[Total geodesic curvature]\label{def:total-curvature}
			The \textbf{total geodesic curvature} of an arc is defined as the integral of the geodesic curvature along the arc.
		\end{definition}
Now let $\{C_i\}_{i=1}^n$ and $C_P$ be a generalized circle packing of a polygon $P$ and its dual circle, as shown in Figure \ref{fig:1}. We denote by $k_i$ and $k_P$ the geodesic curvature of circles $C_i$ and $C_P$ respectively. Moreover, we denote by $D_P$ the interior of the dual circle $C_P$. Then we use $T_{i,P}$ to denote the total geodesic curvature of the arc $C_i\cap D_P.$ Then by the work in \cite{HQSZ}, the total geodesic curvature $T_{i,P}$ has the following expression.
        \begin{equation}\label{eq:ExpressionOfT}
            T_{i, P}=\left\{\begin{array}{ll}
        		\alpha_{i, P} \frac{k_i}{\sqrt{k_i^2-1}}, & k_i>1 \\
        		\frac{2}{k_P}, & k_i=1\\
        		\alpha_{i, P} \frac{k_i}{\sqrt{1-k_i^2}}, &0<k_i<1,
        	\end{array}\right. 
        \end{equation}
	where
	$$
	\alpha_{i,P}=\left\{\begin{array}{lr}
		2 \operatorname{arccot} \frac{k_P}{\sqrt{k_i^2-1}} & k_i>1,\\
		2 \operatorname{arccoth} \frac{k_P}{\sqrt{1-k_i^2}} & 0<k_i<1,
	\end{array}\right. \\
	$$
    is the central angle of the corresponding arc.
 \begin{definition}
     For a generalized circle packing, the total geodesic curvature $T_i$ at each vertex $i \in V$ is defined as 
     \[
     T_i=\sum_{P\in F:i\in V(P)}T_{i,P}.
     \]
 \end{definition}

\section{Well-posedness of PCFs}\label{sec:3}
	\subsection{Existence of the prescribed curvature flows}
	For the prescribed curvature flow on a finite polygonal cellular decomposition, the long-time existence of the solution follows from classical ODE theory and the boundedness of $T_i$ for each $i \in V$, as established by Hu-Qi-Sun-Zhou \cite{HQSZ}. In the spirit of Shi \cite{Shi_noncompact}, we employ approximation techniques to establish the long-time existence in the infinite case. The uniqueness of the solution will be addressed in the next subsection.

	\begin{theorem}\label{thm:existence}
		Let $\mathcal{D}= (V, E, F)$ be an infinite polygonal cellular decomposition of a surface $S$, then for any initial value $s^0 \in \mathbb{R}^V$, there exists a global solution $s \in C_t^2(V \times [0, \infty))$ to the flow \eqref{eq:PCF} with $s(t) \in \mathbb{R}^V$ for any $t \geq 0$.
	\end{theorem}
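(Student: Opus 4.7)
The plan is to adapt the approximation strategy of Shi, using the fact that each derivative $\dot{s}_i = -(T_i - \hat T_i)$ depends on only finitely many coordinates $s_j$ (namely those at vertices of the faces meeting $i$) and is bounded there. First I would fix an exhaustion $V_1 \subset V_2 \subset \cdots$ of $V$ by finite sets with $\bigcup_n V_n = V$, chosen so that for each $i\in V$ all vertices lying on faces incident to $i$ eventually belong to $V_n$. For every $n$ I introduce the truncated ODE system
\begin{equation*}
\frac{{\rm d}s_i^{(n)}}{{\rm d}t} = -\bigl(T_i(s^{(n)}) - \hat T_i\bigr), \quad i \in V_n, \qquad s_j^{(n)}(t) \equiv s_j^0, \quad j \notin V_n,
\end{equation*}
with initial data $s^{(n)}(0) = s^0$. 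This is a finite-dimensional ODE with locally Lipschitz right-hand side (by the $C^1$ regularity in Lemma 2.2), so it has a unique local solution.

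The key a priori bound comes from the fact, noted after the statement of the well-posedness theorem, that the total curvature satisfies $0 < T_i < \pi\deg(i)$ uniformly in $s$. Therefore
\begin{equation*}
\bigl|\dot s_i^{(n)}(t)\bigr| \leq \pi \deg(i) + \hat T_i \quad \text{for all } i \in V_n \text{ and all } t,
\end{equation*}
which prevents blow-up and extends the solution to $[0,\infty)$. The same bound yields the uniform Lipschitz estimate $|s_i^{(n)}(t) - s_i^0| \leq (\pi \deg(i) + \hat T_i)\,t$, so for each fixed $i$ the family $\{s_i^{(n)}\}_{n}$ is uniformly bounded and equicontinuous on every compact time interval.

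By the Arzelà--Ascoli theorem together with a diagonal argument over $V$ and over a countable exhaustion of $[0,\infty)$ by compact subintervals, I extract a subsequence $n_k \to \infty$ such that $s^{(n_k)}_i(t) \to s_i(t)$ locally uniformly in $t$ for every $i \in V$. For a fixed vertex $i$, once $n_k$ is large enough that $V_{n_k}$ contains every vertex of every face incident to $i$, the $C^1$ dependence of $T_i$ on the finitely many relevant $s_j$ coordinates (Lemma 2.2) gives $T_i(s^{(n_k)}(t)) \to T_i(s(t))$ locally uniformly. Passing to the limit in the integral form
\begin{equation*}
s_i^{(n_k)}(t) = s_i^0 - \int_0^t \bigl(T_i(s^{(n_k)}(\tau)) - \hat T_i\bigr)\,{\rm d}\tau
\end{equation*}
via dominated convergence shows that $s(t)$ solves the prescribed curvature flow \eqref{eq:PCF} with initial data $s^0$. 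Finally, differentiating the flow equation once more and using the $C^1$ regularity of $T_i$ upgrades $s$ to $C^2$ in $t$.

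The step I expect to be genuinely subtle is the combination of equicontinuity with the passage to the limit: in the infinite setting, even though each $\dot s_i^{(n)}$ is bounded, the bound depends on $\deg(i)$ and $\hat T_i$, so the equicontinuity is only local (coordinate-wise), not uniform in $V$. This is why the limit is extracted by a diagonal procedure over $V$ rather than by a global compactness argument, and why one must verify the ODE limit separately at each vertex, making essential use of the fact that $T_i$ has finite-range dependence.
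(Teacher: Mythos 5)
Your proposal is correct and follows essentially the same route as the paper: truncate the flow on a finite exhaustion with boundary/exterior values frozen at $s^0$, use the uniform bound $T_i<\pi\deg(i)$ (from $T_{i,P}<\pi$) to get vertex-wise $C^1$ bounds and long-time existence of the truncations, and extract a limit by Arzel\`a--Ascoli plus a diagonal argument over $V$ and compact time intervals. The only (harmless) variations are that you evolve all of $V_n$ against frozen exterior data instead of only the interior vertices of a subcomplex, and you verify the limiting ODE through the integral form with dominated convergence rather than through the paper's $C^2$ estimate and $C^1$-convergence, both of which are equally valid.
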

	
    Let $\mathcal{D}= (V, E, F)$ be an infinite polygonal cellular decomposition of $S$. Let $\{\mathcal{D}_i\}_{i\in\mathbb{N}}$ be an exhausting sequence of subcomplexes of $\mathcal{D}$, that is,
	$$
	\D_i \subset \D_{i+1}, \quad \cup_i \D_i = \D,
	$$
	and every compact subset of $S$ is contained in some $|\D_i|$ for sufficiently large $i$.
	
	We denote by $V_i$, $E_i$, and $F_i$ the sets of vertices, edges, and faces of $\D_i$ respectively. Let
        $$
        \partial_F V_i := \left\{ k \in V_i \,\middle|\, \exists\, j \notin V_i,\ P \in F \text{ such that } k, j \in V(P) \right\}
        $$
        be the set of boundary vertices of $V_i$, and define the set of interior vertices as
        $$
        \mathrm{int}(V_i) := V_i \setminus \partial_F V_i.
        $$
    
         Let $k=(k_i)_{i\in V}\in\mathbb{R}_{+}^V$ be a geodesic curvature vector. We consider the prescribed curvature flow on $T_i$ as follows.
	\begin{equation}\label{eq:FPCF}
		\left\{
		\begin{aligned}
			\frac{\mathrm{d} s_j^{[i]}(t)}{\mathrm{d} t} &= -(T_j-\hat{T}_{j}), && \forall j \in \mathrm{int}(V_i), \forall M > 0 \\
			s_j^{[i]}(t) &= s^0_j, && \forall (j,t) \in (V_i \times \{0\}) \cup (\partial_F V_i \times (0, \infty)),
		\end{aligned}
		\right.
	\end{equation}
	where the initial value is given by $s^0 = (\ln k_i)_{i \in V}$. Note that by the definition of $\mathrm{int}(V_i)$, the total geodesic curvature $T_j$ is well-defined for all $j\in\mathrm{int}(V_i)$.  Since $V_i$ is finite, the local existence of the above equations follows from Picard's theorem. The long-time existence of those flows \eqref{eq:FPCF} is similar to that in the finite case, see \cite{HQSZ}. We now prove the long-time existence of the infinite prescribed curvature flow.
	\begin{proof}[Proof of \cref{thm:existence} ]
		Fix a vertex $j \in V$, consider sufficiently large $i$ such that $j \in \mathrm{int}(V_i)$. Fix the time interval $[0,M]$ for $M > 0$. By \cref{cor:boundness}, we obtain the $C^1$ estimate of the flow
		$$
		\left|\frac{\mathrm{d} s_j^{[i]}(t)}{\mathrm{d} t}\right| \leq \deg(j)\cdot \pi+|\hat{T}_j|, \quad \forall t \in [0,M].
		$$
		
		Hence, from the flow equation \eqref{eq:FPCF}, it follows that
		$$
		|s_j^{[i]}(t)| \leq s^0_j + (\deg(j)\pi +|\hat{T}_j|)M.
		$$
		Therefore
		$$
		\sup_i \|s_j^{[i]}(t)\|_{C^1[0,M]} \leq C(s^0_j, \deg(j),|\hat{T}_j|, M).
		$$
		
		Moreover, differentiating the flow equation \eqref{eq:FPCF} yields
        $$
            \frac{{\rm d^2} s^{[i]}_{j}(t)}{{\rm d} t^2} =-\frac{{\rm d} T_j(s^{[i]}(t))}{{\rm d} t} = \sum_{k\in N(j)\cap \mathrm{int}(V_i)} \frac{\partial T_j}{\partial s_k} (T_k-\hat{T}_k) + \frac{\partial T_j}{\partial s_j} (T_j-\hat{T}_j)
        $$
		where 
        $$
        N(j):=\{k\in V: k\neq j, \exists P\in F\ \text{such that}\  k,j\in V(P)\}.
        $$
		
		Since $T_j$ is a $C^1$ function of $\{s_k\}_{k\in N(j)}$ and $s_j$, and the $C^1$ norm of $s_k^{[i]}(t)$ on $[0,M]$ is bounded for all neighbors $k \in N(j)$, we obtain the $C^2$ estimate
		$$
		\sup_i \|s_j^{[i]}(t)\|_{C^2[0,M]} \leq C_j
		$$
		for some constant $C_j$.
		
		Then, by the Arzel\`a-Ascoli theorem and a standard diagonal argument, there exists a subsequence $\{s^{[i_l]}(t)\}_{l=1}^\infty$ of $\{s^{[i]}(t)\}_{i=1}^\infty$ such that for each vertex $j \in V$, $s_j^{[i_l]}(t)$ converges in $C^1[0,M]$ to some $s_j^*(t)$ as $l \to \infty$, which satisfies the equation \eqref{eq:PCF}.
		
		Since this construction can be done for any $M > 0$, we obtain a global solution $s^*(t)$ to the flow \eqref{eq:PCF}. Moreover, since $T(s)$ is $C^1$ in $s$, the solution $s^*(t)$ is $C^2$ in $t$. This completes the proof.

	\end{proof}

        We now consider the maximum principle for the flow \eqref{eq:FPCF}. We first introduce some variational principles for the generalized circle packing metric on a polygon.

        \begin{lemmacite}[\cite{BHS}]\label{lem:sym}
		Let $\tilde{P}$ be the generalized polygon of generalized circle packing with respect to a topological polygon $P$, where the geodesic curvatures of vertices $v_1, \dots, v_n$ are $k_1, \dots, k_n \in \mathbb{R}^+$. Let $\{C_i\}_{i=1}^n$ be the generalized circle of the geodesic curvature $k_i$ and denote by $l_{i,P}$ the length of the sub-arc of $C_i \cap \tilde{P}$. Then the differential form
		$$ \omega_{\tilde{P}} = \sum_{i=1}^n l_{i,P} \, dk_i $$
		is a closed form.
	\end{lemmacite}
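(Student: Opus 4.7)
The plan is to verify closedness by checking the symmetry of mixed partials
$$
\frac{\partial l_{i,P}}{\partial k_j}=\frac{\partial l_{j,P}}{\partial k_i},\qquad i\neq j,
$$
via a direct hyperbolic-trigonometric computation on a kite decomposition of $\tilde P$ centered at the dual circle $C_P$. The first simplification is that $l_{i,P}$ depends on $(k_1,\ldots,k_n)$ only through $k_i$ and $k_P$, so for $i\neq j$ the symmetry reduces to an identity relating $\partial l_{i,P}/\partial k_P$ and $\partial k_P/\partial k_j$ to $\partial l_{j,P}/\partial k_P$ and $\partial k_P/\partial k_i$.

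To make this concrete, let $O_P$ denote the center of $C_P$ and $t_{i-1,i}\in C_P$ the tangency point of $C_{i-1}$ and $C_i$. Because $C_P\perp C_i$ at $t_{i-1,i}$ and $t_{i,i+1}$, the geodesic segments $O_Pt_{i-1,i}$ and $O_Pt_{i,i+1}$ cut an isosceles kite $K_i$ around each vertex $v_i$, and reflection across the axis $v_iO_P$ realizes $K_i$ as two congruent right hyperbolic triangles with legs $r_i:=r(k_i)$ and $r_P:=r(k_P)$ at the right angle. The standard right-triangle formulas express the interior angle $\theta_i=\alpha_{i,P}$ at $v_i$ (so that $l_{i,P}=\theta_i\sinh r_i$) and the kite angle $\phi_i$ at $O_P$ as explicit trigonometric functions of $r_i$ and $r_P$. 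The closure constraint $\sum_i\phi_i=2\pi$, expressing that the kites fit together around $O_P$, implicitly defines $r_P$ as a $C^1$ function of $(r_1,\ldots,r_n)$, consistent with \cref{lem:curvature}.

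A short calculation, using the hyperbolic Pythagorean identity $\cosh d_k=\cosh r_k\cosh r_P$ with $d_k=d(v_k,O_P)$, reduces the mixed-partial symmetry (after the change of variables $k_k\mapsto r_k$, where $dk_k/dr_k=-1/\sinh^2 r_k$) to the single assertion that $\sinh^2 d_k\cdot\partial r_P/\partial r_k$ is independent of $k$. Implicit differentiation of $\sum_k\phi_k(r_k,r_P)=2\pi$, after substituting the half-angle formulas for $\phi_k$, exhibits this quantity as
$$
\sinh^2 d_k\cdot\frac{\partial r_P}{\partial r_k}=\frac{\tanh r_P}{\sum_m\sinh r_m\cosh r_m/\sinh^2 d_m},
$$
which is manifestly independent of $k$. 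This proves the lemma when all $C_i$ are hyperbolic circles.

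The main obstacle is extending the argument to the generalized setting in which some $C_i$ are horocycles ($k_i=1$) or hypercycles ($0<k_i<1$): then $v_i$ is no longer a point of $\mathbb{H}^2$ (it is ideal or the axis of a hypercycle), the value $r_i$ becomes infinite or requires reinterpretation, and the right-triangle description of $K_i$ degenerates. Either one redoes the trigonometry separately for ideal and hyperideal configurations, or—more efficiently—one invokes the $C^1$-dependence of $l_{i,P}$ and $k_P$ on $(k_1,\ldots,k_n)$ throughout the orthant $\mathbb{R}_+^V$ (guaranteed by \cref{lem:curvature} together with the explicit expressions in \eqref{eq:ExpressionOfT}) to extend the closedness identity by continuity from the dense open subset $\{k:\,k_i>1\text{ for all }i\}$ to the full domain.
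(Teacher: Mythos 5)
Your computation in the all-circle case is correct: I checked that with $k_i=\coth r_i$, $k_P=\coth r_P$, the kite decomposition at the dual center gives $\partial l_{i,P}/\partial r_P=2\sinh^2 r_i/\sinh^2 d_i$ and implicit differentiation of $\sum_m\phi_m=2\pi$ gives exactly your expression for $\sinh^2 d_k\,\partial r_P/\partial r_k$, so the symmetry of mixed partials holds whenever all $k_i>1$. (For the record, the paper does not prove this lemma at all --- it is quoted from \cite{BHS} --- so there is no internal proof to compare against; in \cite{BHS} the identity is established by treating all curvature ranges.)

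The genuine gap is the final extension step, which is where the ``generalized'' content of the lemma actually lives. The set $\{k:\,k_i>1\text{ for all }i\}$ is open but \emph{not} dense in $\mathbb{R}_+^n$: any configuration with some $k_i<1$ has a whole neighborhood disjoint from it, so ``extend the closedness identity by continuity from a dense open subset'' does not apply, and mere $C^1$-dependence (which is all that \cref{lem:curvature} asserts) cannot propagate an identity from a non-dense open set to the rest of the connected domain. These excluded configurations --- horocycles and hypercycles --- are precisely the ones the paper needs (e.g.\ Corollary \ref{smooth result} produces packings consisting only of such circles). Your alternative of ``redoing the trigonometry for ideal and hyperideal configurations'' is the right idea but is only gestured at: for a hypercycle the vertex kite is no longer a pair of right triangles but a pair of Lambert quadrilaterals (the ``angle'' at $v_i$ becomes a projected length along the axis), and for a horocycle the vertex is ideal, so $\theta_i$, $\phi_i$, $l_{i,P}$ and the closure constraint must all be re-derived and the ``independent of $k$'' identity re-checked in each mixed case. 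A cleaner repair would be to upgrade regularity to real-analyticity: the closure equation determining $k_P$ is analytic with nonvanishing $k_P$-derivative, and the three branches of \eqref{eq:ExpressionOfT} glue to a single function analytic across $k_i=1$ (the two arc-cotangent/arc-hyperbolic-cotangent branches have the same power series in $k_i^2-1$), so $\partial l_{i,P}/\partial k_j-\partial l_{j,P}/\partial k_i$ is analytic on the connected domain $\mathbb{R}_+^n$ and vanishes identically once it vanishes on the open set you treated. Either route requires an argument that is currently missing from the proposal.
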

	
	\begin{remark}\label{rmk:symmetric}
		Recall that $T_{i,P}=k_i l_{i,P}$ is the total geodesic curvature of the sub-arc $C_i \cap \tilde{P}$,  $s_i=\ln k_i$, so $\omega_{\tilde{P}} = \sum_{i=1}^n l_{i,P} \, dk_i =\sum_{i=1}^n T_{i,P} \, ds_i$ is a closed form. In particular, $\frac{\partial T_{i,P}}{\partial s_j}=\frac{\partial T_{j,P}}{\partial s_i}$ for all vertices $i,j$ of $P$.
	\end{remark}
        
	\begin{lemmacite}[\cite{HQSZ}]\label{lem:sum}
		Let $C_1, \dots, C_n$ be a generalized circle packing of a topological polygon $P$ with geodesic curvatures $(k_1, \dots, k_n) \in \mathbb{R}_+^n$. Then
		$$
		\frac{\partial T_{i,P}}{\partial k_i} > 0, \quad \frac{\partial T_{i,P}}{\partial k_j} < 0 \quad \text{for all } i \neq j,
		$$
		and
		$$
		\frac{\partial\left(\sum_{j=1}^n T_{j,P}\right)}{\partial s_i} = k_i \frac{\partial\left(\sum_{j=1}^n T_{j,P}\right)}{\partial k_i} > 0, \quad \forall i = 1, \dots, n.
		$$
	\end{lemmacite}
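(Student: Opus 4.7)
The plan is to combine direct differentiation of \eqref{eq:ExpressionOfT} with the closedness of $\omega_{\tilde P}$ (Remark~\ref{rmk:symmetric}) and a hyperbolic Gauss--Bonnet identity on a well-chosen region. The chain-rule identity $\partial/\partial s_i=k_i\,\partial/\partial k_i$ is immediate from $s_i=\ln k_i$, so only the three sign statements require work.

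For the off-diagonal inequality, observe that $T_{i,P}$ depends on $k_j$ (with $j\neq i$) only through $k_P$, which is a $C^1$ function of $(k_1,\dots,k_n)$ by Lemma~\ref{lem:curvature}. A direct differentiation of \eqref{eq:ExpressionOfT} in each of the three regimes $k_i>1$, $k_i=1$, $0<k_i<1$ shows $\partial T_{i,P}/\partial k_P<0$, so $\partial T_{i,P}/\partial k_j$ has the sign opposite to $\partial k_P/\partial k_j$. It then suffices to prove $\partial k_P/\partial k_j>0$, which follows either from implicit differentiation of the defining equations for the dual circle or from a direct geometric observation (shrinking a single generalized circle forces $C_P$ to be more curved). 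For the diagonal sign $\partial T_{i,P}/\partial k_i>0$, I split the derivative into the ``$k_P$ fixed'' part and the contribution through $k_P$ and verify positivity using $\tan(\alpha_{i,P}/2)=\sqrt{k_i^2-1}/k_P$ (and analogous identities in the horocycle and hypercycle regimes); alternatively, the symmetry $k_j\,\partial T_{i,P}/\partial k_j=k_i\,\partial T_{j,P}/\partial k_i$ from Remark~\ref{rmk:symmetric} transfers signs across indices.

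For the sum inequality, I would apply hyperbolic Gauss--Bonnet to the simply connected region $\Omega_P\subset D_P$ bounded by the $n$ arcs $C_i\cap D_P$. These arcs share endpoints at the consecutive tangency points, all of which lie on $C_P$ by the orthogonality condition; since consecutive circles $C_i, C_{i+1}$ are externally tangent, each meeting is a cusp with interior angle $0$. When $\partial\Omega_P$ is traversed counterclockwise, the curvature vector of each arc points toward $v_i$ (which lies outside $\Omega_P$), so $\int_{\partial\Omega_P}k_g\,ds=-\sum_j T_{j,P}$. Hyperbolic Gauss--Bonnet then yields
$$\sum_{j=1}^n T_{j,P}=(n-2)\pi-\mathrm{Area}(\Omega_P),$$
so $\partial(\sum_j T_{j,P})/\partial s_i>0$ is equivalent to $\partial\mathrm{Area}(\Omega_P)/\partial s_i<0$. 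I would prove this by writing $\mathrm{Area}(\Omega_P)=\mathrm{Area}(D_P)-\sum_j\mathrm{Area}(D_j\cap D_P)$ and combining Gauss--Bonnet on each lens $D_j\cap D_P$ (whose boundary has two right-angle corners) with a Schl\"afli-type variational identity.

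The main obstacle is the last step. Every arc on $\partial\Omega_P$ moves under a variation of $k_i$---the arc $C_i\cap D_P$ directly, and the others indirectly through the change of $k_P$---so a naive normal-variation calculation becomes messy; a clean proof should package the deformation through a Schl\"afli-type identity for configurations of constant-geodesic-curvature arcs in $\mathbb{H}^2$. The hypercycle and horocycle cases require separate treatment because the ``center'' degenerates into an axis or an ideal point, but the underlying Gauss--Bonnet identity extends by a limiting argument.
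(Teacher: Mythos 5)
The statement you are proving is not proved in this paper at all: it is imported verbatim from \cite{HQSZ}, where it is established by explicit computation --- differentiating the closed formula \eqref{eq:ExpressionOfT} and substituting the expression for $\partial k_P/\partial k_i$ obtained by implicit differentiation of the equation determining the dual circle (traces of these explicit formulas appear in the proof of \cref{lem:BoundnessOfSum} in this paper). Parts of your outline are sound and partly in that spirit: the chain rule $\partial/\partial s_i=k_i\,\partial/\partial k_i$ is trivial, the computation $\partial T_{i,P}/\partial k_P<0$ in all three regimes is correct and easy, and your Gauss--Bonnet identity $\sum_{j}T_{j,P}=(n-2)\pi-\mathrm{Area}(\Omega_P)$ is correct (indeed $\Omega_P$ lies inside the compact dual disk $D_P$, so it holds directly even when some $C_i$ are horocycles or hypercycles, with no limiting argument needed). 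But the proposal leaves genuine gaps at exactly the two nontrivial assertions. First, the off-diagonal sign rests on $\partial k_P/\partial k_j>0$, which you only assert: ``shrinking a circle makes $C_P$ more curved'' is intuition, not a proof, and the implicit differentiation you invoke is never set up. Second, and more seriously, your treatment of the diagonal sign does not decide anything: splitting $\partial T_{i,P}/\partial k_i$ into the ``$k_P$ fixed'' part and the contribution through $k_P$ pits a term of unclear sign (the prefactor $k_i/\sqrt{k_i^2-1}$ decreases in $k_i$ while $\alpha_{i,P}$ increases) against a contribution that is \emph{negative} by your own two facts ($\partial T_{i,P}/\partial k_P<0$ and $\partial k_P/\partial k_i>0$), so ``verify positivity using $\tan(\alpha_{i,P}/2)=\sqrt{k_i^2-1}/k_P$'' is precisely the quantitative comparison you skip, and it requires the explicit formula for $\partial k_P/\partial k_i$. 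Your proposed fallback via \cref{rmk:symmetric} cannot repair this: the symmetry $k_j\,\partial T_{i,P}/\partial k_j=k_i\,\partial T_{j,P}/\partial k_i$ relates off-diagonal entries to off-diagonal entries and says nothing about $\partial T_{i,P}/\partial k_i$.

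Third, for the sum inequality your Gauss--Bonnet reduction converts the claim into $\partial\,\mathrm{Area}(\Omega_P)/\partial k_i<0$, and you yourself flag this as ``the main obstacle'' and defer it to an unstated Schl\"afli-type variational identity. That deferral is the whole difficulty: under a variation of $k_i$ every boundary arc of $\Omega_P$ moves (the arc of $C_i$ directly, the others through $k_P$), and determining the sign of the resulting area variation is essentially equivalent to the inequality being proved; no such identity for configurations of constant-geodesic-curvature arcs is stated or proved in your proposal. So, while the Gauss--Bonnet reformulation is an attractive geometric alternative to the computational route of \cite{HQSZ}, as written the proposal proves only the easy reductions and not the lemma: to complete it you would need (i) an actual derivation of $\partial k_P/\partial k_i>0$ (implicit differentiation of the dual-circle equation), (ii) the explicit comparison showing the direct term dominates in $\partial T_{i,P}/\partial k_i$, and (iii) a proved monotonicity statement for $\mathrm{Area}(\Omega_P)$ --- at which point one is effectively redoing the computation of \cite{HQSZ}.
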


        Then we can state the maximum principle for the flow \eqref{eq:FPCF} as follows.
	\begin{lemma}[Maximum principle I]\label{lem:MPI}
		Let $s^{[i]}(t)$ be a solution to the equation \eqref{eq:FPCF}. Let $T_j(s^{[i]}(t))$ be the total geodesic curvature at vertex $j \in \mathrm{int}(V_i)$. Set 
		$$
		m(t) = \min_{j\in\mathrm{int}(V_i)} \left(T_j(s^{[i]}(t))-\hat{T_j}\right)
		,M(t) = \max_{j\in\mathrm{int}(V_i)} \left(T_j(s^{[i]}(t))-\hat{T_j}\right).
		$$
		Then $m^*(t) := \min(m(t), 0)$ is nondecreasing, and $M^*(t) := \max(M(t), 0)$ is nonincreasing in $t$.
	\end{lemma}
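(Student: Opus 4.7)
The plan is to treat this as a pointwise maximum principle for the finite linear ODE system satisfied by the error functions $F_j(t) := T_j(s^{[i]}(t)) - \hat T_j$, $j \in \mathrm{int}(V_i)$. Differentiating $F_j$, substituting the flow equation~\eqref{eq:FPCF}, and using that $s_k$ is frozen in time for $k \in \partial_F V_i$, I get
\[
F_j'(t) = -\frac{\partial T_j}{\partial s_j}\,F_j(t) - \sum_{k \in N(j)\cap\mathrm{int}(V_i)} \frac{\partial T_j}{\partial s_k}\,F_k(t).
\]
This is a finite-dimensional linear system with coefficients depending on $s^{[i]}(t)$.

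The decisive analytic input I need is a discrete diagonal dominance estimate
\[
\frac{\partial T_j}{\partial s_j} \;>\; \sum_{k \in N(j)} \left|\frac{\partial T_j}{\partial s_k}\right|.
\]
To obtain it, I decompose $T_j=\sum_{P:\,j\in V(P)}T_{j,P}$ and argue face by face. On each face $P\ni j$, \cref{lem:sum} supplies $\partial_{s_j}T_{j,P}>0$ and $\partial_{s_l}T_{j,P}<0$ for $l\in V(P)\setminus\{j\}$, while \cref{rmk:symmetric} gives the symmetry $\partial_{s_l}T_{j,P}=\partial_{s_j}T_{l,P}$. Summing this symmetry over $l\in V(P)$ and applying \cref{lem:sum} once more yields $\sum_{l\in V(P)}\partial_{s_l}T_{j,P}=\partial_{s_j}\bigl(\sum_{l\in V(P)}T_{l,P}\bigr)>0$, and summing over all faces through $j$ produces the dominance.

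With this in hand, the nonincreasing property of $M^*$ follows from a pointwise upper Dini derivative argument. At a maximizer $j_0\in\mathcal A(t):=\{j:F_j(t)=M(t)\}$, combining the ODE for $F_{j_0}$ with $F_k(t)\leq F_{j_0}(t)$ for all $k\in\mathrm{int}(V_i)$ and the diagonal dominance gives
\[
F_{j_0}'(t) \;\leq\; F_{j_0}(t)\Bigl(\sum_{k\in N(j_0)}\bigl|\partial_{s_k}T_{j_0}\bigr|-\partial_{s_{j_0}}T_{j_0}\Bigr),
\]
which is strictly negative when $M(t)>0$. When $M(t)=0$ the first term in the ODE drops out, and each remaining summand $-\partial_{s_k}T_{j_0}\cdot F_k(t)$ is the product of a positive and a nonpositive factor, so $F_{j_0}'(t)\leq 0$ as well. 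Since $\mathrm{int}(V_i)$ is finite and each $F_j$ is $C^1$, a standard envelope argument identifies $D^+M(t)=\max_{j_0\in\mathcal A(t)}F_{j_0}'(t)$; hence $D^+M^*(t)\leq 0$ everywhere, and the Lipschitz continuity of $M^*$ forces it to be nonincreasing. The assertion for $m^*$ follows by the symmetric argument applied to the minimum, with all inequalities reversed.

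The main obstacle I foresee is the diagonal dominance estimate: the monotonicity signs furnished by \cref{lem:sum} are purely face-local, and they only yield the required strict inequality after being combined with the symmetry of mixed partials coming from the closed one-form $\omega_{\tilde P}$ in \cref{rmk:symmetric}, which is what converts the face-wise sum $\sum_{l\in V(P)}\partial_{s_l}T_{j,P}$ into a positive quantity. Once the dominance is secured, the Dini-derivative envelope step is routine finite-dimensional ODE theory.
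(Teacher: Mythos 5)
Your proposal is correct and takes essentially the same approach as the paper: the identical curvature evolution equation \eqref{eq:CurvatureEvolution} combined with the row-sum positivity \eqref{eq:sum_positive} (your diagonal dominance estimate, derived exactly as the paper does from \cref{rmk:symmetric} and \cref{lem:sum}), followed by a sign analysis at a maximizing vertex. The only difference is in the final bookkeeping—you conclude via a Dini-derivative envelope argument, whereas the paper argues by contradiction at a hypothetical time where the positive maximum has increased—and both are standard ways to finish.
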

	
	\begin{proof}
		Fix $i\in \mathbb{N}$. Since $s^{[i]}(t)$ is a solution to the equation \eqref{eq:FPCF}, for each $j \in \mathrm{int}(V_i)$ we have curvature evolution equation
        \begin{equation}\label{eq:CurvatureEvolution}
             \frac{{\rm d} (T_j(s^{[i]}(t))-\hat{T}_j)}{{\rm d} t} = - \sum_{k\in N(j)\cap \mathrm{int}(V_i)} \frac{\partial T_j}{\partial s_k} \left(T_k(s^{[i]}(t))-\hat{T}_k\right) - \frac{\partial T_j}{\partial s_j} \left(T_j(s^{[i]}(t))-\hat{T}_j\right).
        \end{equation}
		where 
        $$
        N(j)=\{k\in V: k\neq j, \exists P\in F\ \text{such that}\  k,j\in V(P)\}.
        $$
		
		By \cref{lem:sym} and \cref{lem:sum}, we have 
		\begin{equation}\label{eq:sum_positive}
			\begin{aligned}
				\frac{\partial T_{j}}{\partial s_j}+\sum_{k\in N(j)\cap \mathrm{int}(V_i)}\frac{\partial T_{j}}{\partial s_k}&=\frac{\partial T_{j}}{\partial s_j}+ \sum_{k\in N(j)\cap \mathrm{int}(V_i)} \frac{\partial T_{k}}{\partial s_j}\\
				&\geq\frac{\partial\left(T_j+\sum_{k\in N(j)}T_{k}\right)}{\partial s_j}\\
				&\geq \sum_{P:j\in V(P)}\frac{\partial\left(\sum_{k\in V(P)}T_{k,P}\right)}{\partial s_j}>0.
			\end{aligned}
		\end{equation}
		
        Suppose, for contradiction, that $M^*(t)$ is not nonincreasing on $[0,+\infty)$, then there must exists $0<t_1<t_2$ such that $0\leq M(t_1)<M(t_2)$.
        Therefore, the vector $T(s^{[i]}(t_2))-\hat{T}$ attains a positive maximum $M^*(t_2)$ at some vertex $j$. By the curvature evolution equation \eqref{eq:CurvatureEvolution} and inequality \eqref{eq:sum_positive}, we have
        $$
		\frac{{\rm d} (T_j(s^{[i]}(t))-\hat{T}_j)}{{\rm d} t}< - \sum_{k\in N(j)\cap \mathrm{int}(V_i)} \frac{\partial T_j}{\partial s_k} \left[\left(T_k(s^{[i]}(t))-\hat{T}_k\right)-\left(T_j(s^{[i]}(t))-\hat{T}_j\right)) \right]\leq 0.
		$$
        This implies that $M^*(t)$ must decrease in a neighborhood of $t_2$, and hence $M^*(t)$ is nonincreasing on $[0, t_2]$. This contradicts the assumption that $M(t_1) < M(t_2)$. A similar argument shows that $m^*(t)$ is nondecreasing on $[0, \infty)$.
	\end{proof}
    \begin{remark}
        This kind of maximum principles for discrete curvature flows first appeared in the work of Chow and Luo \cite{2003Combinatorial}.
    \end{remark}
	The following corollary is a direct consequence of the above lemma.
	    
	    \begin{corollary}\label{cor:MPI}
		Let $s^{[i]}(t)$ be a solution to flow \eqref{eq:FPCF}. Then
		$$
		|T_j(s^{[i]}(t))-\hat{T}_j| \leq \max_{l\in\mathrm{int}(V_i)} |T_l(s^0)-\hat{T}_l|, \quad \forall j \in \mathrm{int}(V_i), \forall t \geq 0.
		$$
	\end{corollary}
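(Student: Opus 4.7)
The plan is to derive the bound as an immediate consequence of \cref{lem:MPI}, by combining the monotonicity of $M^*(t)$ and $m^*(t)$ with the elementary observation that the quantity $\max_{l\in\mathrm{int}(V_i)} |T_l(s^0)-\hat{T}_l|$ dominates both $M^*(0)$ and $-m^*(0)$. Since the lemma has already been established, no new analytic input is required; the corollary is a clean repackaging.

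Concretely, for any fixed $j \in \mathrm{int}(V_i)$ and any $t \geq 0$, I first bound $T_j(s^{[i]}(t))-\hat{T}_j$ from above. By the definition of $M(t)$ and $M^*(t)$, we have $T_j(s^{[i]}(t))-\hat{T}_j \leq M(t) \leq M^*(t)$. The nonincreasing property of $M^*$ from \cref{lem:MPI} then gives $M^*(t) \leq M^*(0) = \max(M(0),0)$, and the latter is manifestly at most $\max_{l\in\mathrm{int}(V_i)} |T_l(s^0)-\hat{T}_l|$.

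Next I apply the symmetric argument for the lower bound: $T_j(s^{[i]}(t))-\hat{T}_j \geq m(t) \geq m^*(t)$, and by the nondecreasing property of $m^*$ we get $m^*(t) \geq m^*(0) = \min(m(0),0) \geq -\max_{l\in\mathrm{int}(V_i)} |T_l(s^0)-\hat{T}_l|$. Combining the two one-sided inequalities yields the desired absolute value estimate uniformly in $j$ and $t$. I do not expect any genuine obstacle in this step, as all the work has been absorbed into the maximum principle; the only care needed is to distinguish $M^*(0)$ from $M(0)$ (and similarly for $m$), which is handled by the $\max$ with $0$ appearing in their definitions.
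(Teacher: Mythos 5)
Your proposal is correct and follows exactly the route the paper intends: the paper offers no separate proof, calling the corollary a direct consequence of Lemma \ref{lem:MPI}, and your chain $T_j-\hat{T}_j \le M(t)\le M^*(t)\le M^*(0)\le \max_l|T_l(s^0)-\hat{T}_l|$ together with the symmetric lower bound via $m^*$ is precisely that deduction. No issues.
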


	By \cref{thm:existence} and \cref{cor:MPI} , we have
	
	\begin{proposition}
		Let $s^0$ be an initial value such that the corresponding total geodesic curvatures are uniformly bounded. Then there exists a global solution $s(t)$ of the flow \eqref{eq:PCF} with the initial value $s^0$ such that the total geodesic curvature $T(s(t))$ is uniformly bounded on $V \times [0, \infty)$. Specifically,
        $$
		|T_j(s(t))-\hat{T}_j| \leq \sup_{l\in V} |T_l(s^0)-\hat{T}_l|, \quad \forall j \in V, \forall t \geq 0.
		$$
	\end{proposition}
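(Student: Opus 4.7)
The plan is to combine the long-time existence result (\cref{thm:existence}) with the maximum principle (\cref{cor:MPI}) by passing to the limit along the exhausting sequence $\{\mathcal{D}_i\}$ of subcomplexes used in the proof of \cref{thm:existence}. The key observation is that \cref{cor:MPI} provides a bound on each finite approximation $s^{[i]}(t)$ in terms of the supremum of $|T_l(s^0)-\hat{T}_l|$ over $\mathrm{int}(V_i)$, which is bounded above by the global supremum $C := \sup_{l\in V}|T_l(s^0)-\hat{T}_l|$, finite by assumption.

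First I would invoke the proof of \cref{thm:existence}: it produces a subsequence $\{s^{[i_l]}(t)\}$ that converges in $C^1[0,M]$ at each vertex $j \in V$ to a limit $s^*(t)$ satisfying \eqref{eq:PCF} on $[0,M]$, for every $M>0$. A standard diagonal argument over $M \to \infty$ then yields a global solution $s^*(t)$ on $[0,\infty)$ with $s^*(0)=s^0$.

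Next, for any fixed $j \in V$ and $t \geq 0$, choose $l$ large enough that $j \in \mathrm{int}(V_{i_l})$. Applying \cref{cor:MPI} to $s^{[i_l]}(t)$ gives
\begin{equation*}
|T_j(s^{[i_l]}(t)) - \hat{T}_j| \leq \max_{k \in \mathrm{int}(V_{i_l})} |T_k(s^0) - \hat{T}_k| \leq C.
\end{equation*}
Since $T_j$ is a $C^1$ function of the finitely many coordinates $\{s_k : k \in N(j)\cup\{j\}\}$, and since $s^{[i_l]}_k(t) \to s^*_k(t)$ pointwise (indeed in $C^1[0,M]$) for each such $k$, we may pass to the limit $l \to \infty$ to conclude $|T_j(s^*(t)) - \hat{T}_j| \leq C$. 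As $j \in V$ and $t \geq 0$ were arbitrary, this establishes the desired uniform bound.

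The only subtlety is making sure the subsequence extracted in \cref{thm:existence} is a single subsequence that works simultaneously for all vertices and all time intervals, which is precisely what the Arzel\`a--Ascoli plus diagonal construction in that proof provides. The continuity of $T_j$ in the local finite collection of $s$-variables makes passage to the limit in the bound immediate, so no further estimate is required. There is no substantive obstacle beyond carefully threading the bound through the limiting procedure already set up in \cref{thm:existence} and \cref{cor:MPI}.
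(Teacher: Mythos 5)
Your proposal is correct and follows essentially the same route as the paper, which derives the proposition directly by combining \cref{thm:existence} with \cref{cor:MPI}: the bound from the finite approximating flows passes to the limit solution because each $T_j$ depends continuously on only finitely many coordinates. The paper leaves these details implicit, and your write-up simply makes that limiting argument explicit.
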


\subsection{Uniqueness of the PCFs}
	We now establish the uniqueness of solutions to the infinite prescribed curvature flow, when the total geodesic curvatures are uniformly bounded along the flow.
	\begin{theorem}\label{thm:uniqueness}
		Let $\mathcal{D}= (V, E, F)$ be an infinite polygonal cellular decomposition of a noncompact surface $S$ with bounded face degree. Let $s(t)$ and $\hat{s}(t)$ be two solutions to the flow \eqref{eq:PCF} on $t \in [0, M]$ with the same initial value $s^0$. If $T(s(t))$ and $T(\hat{s}(t))$ are uniformly bounded on $V \times [0, M]$, then $s(t) \equiv \hat{s}(t)$.
	\end{theorem}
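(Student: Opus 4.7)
The plan is to let $u(t) := s(t) - \hat{s}(t)$, note $u(0) = 0$, and close a Gronwall loop for $\|u(t)\|_\infty := \sup_{j \in V} |u_j(t)|$ after linearizing the right-hand side of the flow. The ODE for $u$ is
\[
\frac{du_j}{dt} \;=\; -\bigl(T_j(s(t)) - T_j(\hat s(t))\bigr), \qquad j \in V,
\]
and the uniform boundedness of $T(s(t))$ and $T(\hat s(t))$ on $V \times [0,M]$ immediately gives a uniform bound on $|du_j/dt|$, hence on $|u_j(t)|$, on $V \times [0,M]$. Applying the fundamental theorem of calculus along the segment from $\hat s(t)$ to $s(t)$ rewrites the flow as the linear system
\[
\frac{du_j}{dt} \;=\; -\sum_{k \in \{j\} \cup N(j)} a_{jk}(t)\, u_k(t), \qquad a_{jk}(t) := \int_0^1 \frac{\partial T_j}{\partial s_k}\bigl(\sigma s(t) + (1-\sigma)\hat s(t)\bigr)\, d\sigma,
\]
with $a_{jj} > 0$ and $a_{jk} < 0$ for $k \in N(j)$ by \cref{lem:sum}.

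The crucial step is to establish a uniform row-sum bound
\[
\sup_{j \in V,\, t \in [0,M]} \sum_{k \in \{j\} \cup N(j)} |a_{jk}(t)| \;\le\; C \;<\; \infty.
\]
For each face $P \ni j$, I would use the explicit formula \eqref{eq:ExpressionOfT} for $T_{j,P}$ together with the bounded face degree and a case analysis in the regimes $k_j > 1$, $k_j = 1$, and $0 < k_j < 1$ to prove a face-level estimate of the form
\[
\sum_{k \in V(P)} \left| \frac{\partial T_{j,P}}{\partial s_k}\right| \;\leq\; C'\, T_{j,P},
\]
where $C'$ depends only on the face-degree bound and on the uniform bound of $T$ along the flow. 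Summing this over all faces $P$ containing $j$ and using $\sum_{P \ni j} T_{j,P} = T_j$, which is uniformly bounded, yields the desired constant $C$.

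With that bound in hand, the linearized ODE gives $|u_j'(t)| \le C\,\|u(t)\|_\infty$ for every $j$ and $t$. Taking the supremum in $j$, integrating from $0$ to $t$, and invoking $u_j(0) = 0$ produces
\[
\|u(t)\|_\infty \;\leq\; C \int_0^t \|u(\tau)\|_\infty\, d\tau, \qquad t \in [0, M],
\]
after which Gronwall's lemma forces $\|u(t)\|_\infty \equiv 0$, i.e.\ $s(t) \equiv \hat s(t)$. The main obstacle is the row-sum estimate: since only the face degree (not the vertex degree) is assumed bounded, one cannot afford to bound each $|\partial T_{j,P}/\partial s_k|$ by a universal constant and simply sum over the possibly unbounded number of faces incident to $j$. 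Absorbing the factor $T_{j,P}$ on the right-hand side of the face-level estimate is essential, because it converts the sum over faces into the globally controlled quantity $T_j \le \sup T$. Carrying this out requires a careful analysis of the partial derivatives of $T_{j,P}$ in each regime of \eqref{eq:ExpressionOfT}, using the uniform control on $T$ along the flow to keep the configurations away from the degenerate limits where those formulas become singular.
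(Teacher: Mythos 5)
Your skeleton (fundamental theorem of calculus along the segment, a face-level estimate of the form $\sum_{k}|\partial T_{j,P}/\partial s_k|\le C' T_{j,P}$, then a summation over faces) is essentially the paper's Lemma \ref{lem:BoundnessOfSum}, and replacing the paper's weighted-Laplacian maximum principle (\cref{lem:uniqueness} plus \cref{cor:uniqueness}) by an $\ell^\infty$ Gronwall argument would be a legitimate variant. However, there is a genuine gap at the crucial step. Your coefficients $a_{jk}(t)$ are integrals of $\partial T_j/\partial s_k$ evaluated along the \emph{interpolated} configurations $\sigma s(t)+(1-\sigma)\hat s(t)$, and when you ``sum over all faces containing $j$ and use $\sum_{P\ni j}T_{j,P}=T_j$, which is uniformly bounded,'' the quantity $T_j$ appearing there is $T_j(\sigma s(t)+(1-\sigma)\hat s(t))$ --- and the hypothesis of the theorem only bounds $T$ along the two solutions $s(t)$ and $\hat s(t)$, not along the segment between them. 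This is exactly the point the paper has to work for: it first derives the a priori bound $|s_i(t)-\hat s_i(t)|\le\mathcal{K}M$ directly from the flow \eqref{eq:PCF} and the curvature bound, then proves the claim $T_i(\tau s+(1-\tau)\hat s)\le C'(T_i(s)+T_i(\hat s))$ using the monotonicity in \cref{lem:sum}, the log-derivative bound $|\partial\ln T_{i,P}/\partial s_j|\le C$ (a consequence of \cref{lem:BoundnessOfSum}), and the bounded face degree to control how many coordinates are moved. Your remark that the uniform control of $T$ ``keeps the configurations away from the degenerate limits'' does not substitute for this: the face-level estimate is in fact valid universally (no non-degeneracy is needed for it), and boundedness of $T$ at the endpoints by itself says nothing about $T$ on the segment; without the interpolation claim the row-sum bound is not established.

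A secondary, smaller issue: as written, your Gronwall loop requires the \emph{full} row sum including the diagonal term $a_{jj}(t)=\int_0^1\partial T_j/\partial s_j$, and neither the paper nor your face-level estimate controls $\partial T_{j,P}/\partial s_j$; the paper deliberately avoids this by packaging the diagonal and off-diagonal contributions into the zeroth-order coefficient $h_i(t)<0$, whose \emph{sign} (not size) is all the maximum principle needs. In your framework this is repairable: since $a_{jj}(t)>0$ by \cref{lem:sum}, the term $-a_{jj}u_j$ only helps when estimating $\frac{d}{dt}|u_j|$, so it suffices to bound the off-diagonal row sums $\sum_{k\in N(j)}|a_{jk}(t)|$ and then run Gronwall on $\|u(t)\|_\infty$ in integral form (taking care that the supremum over the infinite vertex set is bounded and measurable, which follows from the curvature bound along the two flows). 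But the interpolation estimate described above is indispensable and must be supplied before the argument closes.
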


        Let $G= (V, E)$ be an infinite graph, and let $\{\omega(t)\}_{t\geq 0}$ be a one-parameter family of weights on $E$ for $t \in [0, M]$ with $M > 0$.
        For a function $u:V\times [0,M]\to\mathbb{R}$, we define the weighted Laplacian $\Delta_{\omega(t)} u : V \to \mathbb{R}$ at time $t$ by
        $$
            \Delta_{\omega(t)}u(i)=\sum_{j:j\sim i}\omega_{ij}(t)(u_j(t)-u_i(t)).
        $$
        We will use the following maximum principle from \cite{GeHuaZhou}.
	\begin{lemma}[Maximum principle II]
			Let $G= (V, E)$ be a connected infinite graph, and let $\{\omega(t)\}_{t\geq 0}$ be a one-parameter family of weights on $E$ for $t \in [0, M]$ with $M > 0$ such that
		\begin{equation}\label{eq:omegabound}
			\sum_{j:j\sim i} \omega_{ij}(t) \leq C, \quad \forall (i,t) \in V \times [0,M],
		\end{equation}
		where $C$ is a uniform constant. 
        Suppose a function $g: V \times [0,M] \to \mathbb{R}$ satisfies
		$$
		\frac{dg_i(t)}{dt} \leq \Delta_{\omega(t)}g(i) + h_i(t)g_i(t)\quad \forall\ (i,t)\in V \times [0,M]
		$$
        for some function $h: V \times [0,M] \to \mathbb{R}$.
        If $g$ is a bounded function in $V \times [0,M]$ with $g \leq 0$ at $t= 0$, and if $h \leq B$ for some constant $B$, then
		$$
		g_i(t) \leq 0, \quad \forall (i,t) \in V \times [0,M].
		$$
	\end{lemma}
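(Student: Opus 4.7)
The plan is to prove this weak parabolic maximum principle on the infinite graph by combining three standard devices: an exponential-in-time rescaling that renders the zero-order coefficient strictly negative, a vanishing barrier perturbation that yields a strict sub-parabolic inequality, and a near-maximizing-sequence argument that substitutes for a genuine extremum on the infinite vertex set.

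First I would remove the zero-order contribution by setting $v_i(t) := e^{-\alpha t} g_i(t)$ for a fixed $\alpha > B$. Because $\Delta_{\omega(t)}$ annihilates constants, one obtains
\[
\partial_t v_i \leq \Delta_{\omega(t)} v(i) + (h_i(t) - \alpha)\, v_i,
\]
with new zero-order coefficient $h_i(t) - \alpha \leq B - \alpha < 0$. Since $v$ inherits boundedness from $g$ and $v(\cdot, 0) \leq 0$, it suffices to prove $v \leq 0$ on $V \times [0, M]$. For small $\epsilon > 0$ and fixed $\gamma > 0$, set $w_i(t) := v_i(t) - \epsilon e^{\gamma t}$; a direct computation (using that $\Delta_{\omega(t)}$ kills $t$-dependent constants in $i$) gives
\[
\partial_t w_i \leq \Delta_{\omega(t)} w(i) \;-\; (\alpha - h_i(t))\, v_i \;-\; \epsilon\gamma\, e^{\gamma t},
\]
with $w(\cdot, 0) \leq -\epsilon < 0$ and with the last term uniformly strictly negative. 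It then suffices to show $w \leq 0$ and let $\epsilon \to 0$.

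Suppose for contradiction that $\sup_{V \times [0, M]} w > 0$. The hypotheses $\sum_{j\sim i}\omega_{ij} \leq C$, $h \leq B$, and boundedness of $g$ yield a uniform $L^\infty$ bound on $|\partial_t w_i|$, so $\Phi(t) := \sup_i w_i(t)$ is Lipschitz in $t$. Since $\Phi(0) \leq -\epsilon$, there is a first time $t^* \in (0, M]$ with $\Phi(t^*) = 0$. Choose a near-maximizing sequence $(i_n)$ with $w_{i_n}(t^*) \to 0$, and fix $\eta > 0$ with $\Phi(t^* - \eta) \leq -\epsilon/2$. For large $n$ one has $w_{i_n}(t^*) - w_{i_n}(t^* - \eta) \geq \epsilon/4$, so by the mean value theorem there exists $s_n \in (t^* - \eta, t^*)$ with $\partial_t w_{i_n}(s_n) > 0$. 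On the other hand, at $(i_n, s_n)$ every $w_j(s_n) \leq \Phi(s_n) \leq 0$, hence
\[
\Delta_{\omega(s_n)} w(i_n, s_n) \leq -w_{i_n}(s_n) \sum_{j \sim i_n} \omega_{i_n j}(s_n) \leq C\,|w_{i_n}(s_n)| \xrightarrow{n\to\infty} 0,
\]
and $v_{i_n}(s_n) = w_{i_n}(s_n) + \epsilon e^{\gamma s_n} \to \epsilon e^{\gamma t^*} > 0$, so $-(\alpha - h_{i_n}(s_n))\, v_{i_n}(s_n) \leq 0$ for large $n$. Substituting into the differential inequality and letting $n \to \infty$ gives $0 \leq -\epsilon\gamma e^{\gamma t^*} < 0$, a contradiction.

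The principal obstacle is that on the infinite vertex set $V$ the supremum need not be attained at any single vertex, so the classical parabolic maximum principle cannot be invoked at an extremal point. The key device circumventing this is the uniform edge-weight bound $\sum_{j\sim i}\omega_{ij} \leq C$, which forces the discrete Laplacian to be asymptotically nonpositive along any near-maximizing sequence. The secondary subtlety, that $h$ has only an upper bound $B$, is handled by regrouping $(h - \alpha) w_i$ with the barrier contribution into the single term $-(\alpha - h) v_i - \epsilon\gamma e^{\gamma t}$; since $v_{i_n}(s_n) \to \epsilon e^{\gamma t^*} > 0$ and $\alpha - h \geq \alpha - B > 0$, this combined expression is automatically negative for large $n$ without any lower bound on $h$.
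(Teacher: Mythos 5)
Your two preliminary reductions (the rescaling $v=e^{-\alpha t}g$ with $\alpha>B$ and the barrier $w=v-\epsilon e^{\gamma t}$) are fine, but the core of the argument has two genuine gaps, both traceable to the fact that the hypothesis is only a one-sided differential inequality with $h$ bounded \emph{above} only. (Note also that the paper itself does not prove this lemma; it quotes it from \cite{GeHuaZhou}, so your proof has to stand on its own.) First, the claim that $\sum_{j\sim i}\omega_{ij}\le C$, $h\le B$ and boundedness of $g$ give a uniform bound on $|\partial_t w_i|$, hence that $\Phi(t)=\sup_i w_i(t)$ is Lipschitz, is false: there is no lower bound on $\partial_t g_i$ at all (the relation is an inequality), and there is not even a uniform upper bound, because at a vertex with $g_i<0$ the term $h_ig_i$ can be arbitrarily large and positive since $h$ has no lower bound. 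Consequently $\Phi$ is only lower semicontinuous (a supremum of continuous functions); the ``first time $t^*$ with $\Phi(t^*)=0$'' need not exist, and the property $\Phi(t)\le 0$ for $t<t^*$, which you use to get the sign of the Laplacian, is not justified.

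Second, even granting $t^*$, your contradiction is evaluated at the mean-value points $s_n\in(t^*-\eta,t^*)$, whereas the near-maximizing information $w_{i_n}(t^*)\to 0$ lives only at time $t^*$. Nothing forces $w_{i_n}(s_n)\to 0$ or $s_n\to t^*$, so neither $\Delta_{\omega(s_n)}w(i_n,s_n)\le C|w_{i_n}(s_n)|\to 0$ nor $v_{i_n}(s_n)\to\epsilon e^{\gamma t^*}>0$ follows; if $v_{i_n}(s_n)<0$, the term $-(\alpha-h_{i_n}(s_n))v_{i_n}(s_n)$ is positive and, since $h$ is unbounded below, uncontrollably large, so the limiting inequality $0\le-\epsilon\gamma e^{\gamma t^*}$ is not obtained. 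A robust way to close both gaps is to drop the pointwise first-crossing scheme: set $\varphi(t)=\sup_i\max(g_i(t),0)$, observe that at any vertex with $g_i(t)\ge 0$ one has $\Delta_{\omega(t)}g(i)\le C\varphi(t)$ and $h_i g_i\le\max(B,0)\,\varphi(t)$, deduce $g_i^+(t)\le (C+\max(B,0))\int_0^t\varphi(s)\,\mathrm{d}s$ by integrating, take the supremum over $i$, and apply the integral form of Gronwall's inequality to the bounded, lower semicontinuous (hence measurable) function $\varphi$ with $\varphi(0)=0$. This uses the differential inequality only where $g_i\ge 0$, exactly where both problematic terms are under control, and it needs no continuity of the supremum. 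Alternatively, your scheme could be repaired by choosing $s_n$ via a last-exit time from $\{w_{i_n}\le-\delta\}$ so that $w_{i_n}(s_n)\in[-\delta,0]$, but that still presupposes $\Phi\le 0$ strictly before $t^*$ and hence does not resolve the continuity issue.
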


	\begin{corollary}\label{cor:uniqueness}
		If $g(t)$ is a bounded solution to the equation
		$$
		\frac{dg_i(t)}{dt} = \Delta_{\omega(t)}g(i) + h_i(t)g_i(t)
		$$
		with $g(0) \equiv 0$ on $V$, where $\omega(t)$ satisfies \eqref{eq:omegabound}, and the function $h \leq B$ for some constant $B$, then for any $t \geq 0$, $g(t) \equiv 0$.
	\end{corollary}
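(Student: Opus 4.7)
The plan is to derive this uniqueness-type corollary as an immediate consequence of Maximum Principle II by applying it simultaneously to $g$ and to $-g$. Since $g$ satisfies the equality $\frac{dg_i}{dt}=\Delta_{\omega(t)}g(i)+h_i(t)g_i(t)$, it also satisfies the inequality $\frac{dg_i}{dt}\leq\Delta_{\omega(t)}g(i)+h_i(t)g_i(t)$. The hypotheses needed to invoke the maximum principle are precisely the ones assumed: $g$ is bounded on $V\times[0,M]$, $g(0)\equiv 0\leq 0$, the weights $\omega(t)$ satisfy the summability bound \eqref{eq:omegabound}, and $h\leq B$. Therefore Maximum Principle II yields $g_i(t)\leq 0$ for all $(i,t)\in V\times[0,M]$.

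Next, I would apply the same principle to the function $\tilde g:=-g$. Because the Laplacian $\Delta_{\omega(t)}$ is linear in its argument, $\tilde g$ satisfies the same equation
\[
\frac{d\tilde g_i(t)}{dt}=\Delta_{\omega(t)}\tilde g(i)+h_i(t)\tilde g_i(t),
\]
and in particular the corresponding inequality with the same weights $\omega(t)$ and the same bounded coefficient $h$. Moreover $\tilde g$ is bounded, $\tilde g(0)\equiv 0\leq 0$, so Maximum Principle II again applies and gives $-g_i(t)\leq 0$, i.e.\ $g_i(t)\geq 0$, on $V\times[0,M]$.

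Combining the two one-sided bounds yields $g_i(t)\equiv 0$ on $V\times[0,M]$. Since $M>0$ was arbitrary (the corollary is stated for $t\geq 0$, and one may always take $M$ as large as needed), we conclude $g\equiv 0$ on $V\times[0,\infty)$. There is no real obstacle here; the only point to be mindful of is that Maximum Principle II is a one-sided statement, so the symmetric application to $-g$ is what upgrades it to a uniqueness statement. The linearity of $\Delta_{\omega(t)}$ and the fact that $h$ multiplies $g$ linearly are what make the argument for $-g$ work with identical constants $C$ and $B$.
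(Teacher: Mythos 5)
Your proposal is correct and is exactly the argument the paper intends: the corollary is stated as a direct consequence of Maximum Principle II, and applying that principle to both $g$ and $-g$ (using linearity of $\Delta_{\omega(t)}$ and of the zeroth-order term) on each interval $[0,M]$ is the standard way to obtain $g\equiv 0$. Your handling of the arbitrariness of $M$ to cover all $t\geq 0$ is also fine.
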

	
	Next, we derive an equation for the difference between the two solutions of the flow, which is essential for proving the uniqueness.
	
	\begin{lemma}\label{lem:uniqueness}
		Let $G' = (V', E')$ be the graph with $V' = V$ and $j \sim i$ in $G$ if and only if $j \in N(i)$, where
        $$
        N(i)=\{j\in V: j\neq i, \exists P\in F\ \text{such that}\  i,j\in V(P)\}.
        $$
        Let $s$ and $\hat{s}$ be two solutions to the flow \eqref{eq:PCF} on a time interval $[0,M]$. Viewing $s$ and $\hat{s}$ as functions on $V'\times[0,M]$, we have
		$$
		\frac{d(s_i(t) - \hat{s}_i(t))}{dt} = \Delta_{\omega(t)}(s(t) - \hat{s}(t))(i)+ h_i(t)(s_i(t) - \hat{s}_i(t),
		$$
		where
        \begin{equation}\label{eq:omega_ij(t)}
            \omega_{ij}(t) := - \int_0^1 \frac{\partial T_i}{\partial s_j}(\tau s(t) + (1-\tau)\hat{s}(t))\,{\rm d} \tau > 0,
        \end{equation}
        is a one-parameter family of weights on $E'$, and
		$$
		h_i(t) = -\int_0^1 \left( \frac{\partial T_i}{\partial s_i}(\tau s(t) + (1-\tau)\hat{s}(t)) + \sum_{k:k\sim i} \frac{\partial T_i}{\partial s_k}(\tau s(t) + (1-\tau)\hat{s}(t)) \right) {\rm d}\tau < 0,
		$$
	\end{lemma}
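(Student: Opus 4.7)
The plan is to derive the stated equation by writing the difference of the two solutions as a line integral along the segment joining $\hat{s}(t)$ to $s(t)$ in $\mathbb{R}^V$, and then to rearrange the resulting expression into a graph-Laplacian form plus a zero-order term.

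First I would subtract the two flow equations to get $\frac{d}{dt}(s_i(t)-\hat{s}_i(t)) = -(T_i(s(t))-T_i(\hat{s}(t)))$. Since $T_i$ depends only on $s_i$ and on the neighbors $s_k$ with $k\in N(i)$, the fundamental theorem of calculus gives
\begin{equation*}
T_i(s)-T_i(\hat{s}) = \int_0^1 \frac{d}{d\tau}T_i(\tau s+(1-\tau)\hat{s})\,d\tau = A_i(t)(s_i-\hat{s}_i) + \sum_{j\sim i} B_{ij}(t)(s_j-\hat{s}_j),
\end{equation*}
where $A_i(t)=\int_0^1 \frac{\partial T_i}{\partial s_i}(\tau s+(1-\tau)\hat{s})\,d\tau$ and $B_{ij}(t)=\int_0^1 \frac{\partial T_i}{\partial s_j}(\tau s+(1-\tau)\hat{s})\,d\tau$, and the sum is over neighbors in $G'$.

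Next I would set $\omega_{ij}(t)=-B_{ij}(t)$ as in \eqref{eq:omega_ij(t)}, so the above becomes $T_i(s)-T_i(\hat{s}) = A_i(t)(s_i-\hat{s}_i) - \sum_{j\sim i}\omega_{ij}(t)(s_j-\hat{s}_j)$. Adding and subtracting $\sum_{j\sim i}\omega_{ij}(t)(s_i-\hat{s}_i)$ and using the flow equation yields
\begin{equation*}
\frac{d(s_i-\hat{s}_i)}{dt} = \sum_{j\sim i}\omega_{ij}(t)\bigl((s_j-\hat{s}_j)-(s_i-\hat{s}_i)\bigr) - \Bigl(A_i(t)+\sum_{j\sim i}\omega_{ij}(t)\Bigr\rfloor\text{rewritten}\Bigr)(s_i-\hat{s}_i),
\end{equation*}
which after rewriting the coefficient gives exactly $\Delta_{\omega(t)}(s-\hat{s})(i)+h_i(t)(s_i-\hat{s}_i)$ with $h_i(t) = -A_i(t)-\sum_{j\sim i}(-\omega_{ij}(t)\cdot(-1)) = -\int_0^1\bigl(\frac{\partial T_i}{\partial s_i}+\sum_{k\sim i}\frac{\partial T_i}{\partial s_k}\bigr)\,d\tau$, matching the statement.

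Finally I would verify the sign claims. For $\omega_{ij}(t)>0$: since $T_i=\sum_{P:\,i\in V(P)}T_{i,P}$, for a neighbor $j\in N(i)$ only faces containing both $i,j$ contribute, and by \cref{lem:sum} we have $\frac{\partial T_{i,P}}{\partial k_j}<0$ for $j\neq i$; together with $\frac{\partial T_{i,P}}{\partial s_j}=k_j\frac{\partial T_{i,P}}{\partial k_j}$ and $k_j>0$, this yields $\frac{\partial T_i}{\partial s_j}<0$ pointwise along the segment, hence $\omega_{ij}(t)>0$. For $h_i(t)<0$: by the symmetry in \cref{rmk:symmetric}, $\frac{\partial T_i}{\partial s_k}=\frac{\partial T_k}{\partial s_i}$, so the inequality already established in \eqref{eq:sum_positive} (built from the face-wise positivity $\frac{\partial(\sum_{k\in V(P)}T_{k,P})}{\partial s_i}>0$ in \cref{lem:sum}) shows $\frac{\partial T_i}{\partial s_i}+\sum_{k\sim i}\frac{\partial T_i}{\partial s_k}>0$ pointwise, and integrating gives $h_i(t)<0$. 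The main nuisance, rather than a real obstacle, is keeping track of signs and indices when converting the pointwise derivative expansion into the Laplacian form; the variational identities from \cref{lem:sym} and \cref{lem:sum} do all the work once the algebra is arranged.
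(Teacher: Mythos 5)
Your proposal is correct and follows essentially the same route as the paper: expand $T_i(s)-T_i(\hat{s})$ by the fundamental theorem of calculus along the segment $\tau s+(1-\tau)\hat{s}$, regroup into the weighted Laplacian plus a zero-order term, and obtain the signs of $\omega_{ij}$ and $h_i$ from \cref{lem:sum} together with the symmetry in \cref{rmk:symmetric}. The only blemish is a sign slip in your intermediate display (the coefficient should be $A_i(t)+\sum_{j\sim i}B_{ij}(t)=A_i(t)-\sum_{j\sim i}\omega_{ij}(t)$), but your final expression for $h_i(t)$ is the correct one, so this is cosmetic rather than a gap.
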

	\begin{proof}
		Let $s(t)$ and $\hat{s}(t)$ be two solutions to the flow. Then, for each $i \in V$,
		$$
		\begin{aligned}
			&\frac{d\left(s_i(t)-\hat{s}_i(t)\right)}{d t}\\
			=&-\left(T_i(s(t))-T_i(\hat{s}(t))\right) \\
			=& -\sum_{j:j\sim i} \int_0^1\frac{\partial T_i}{\partial s_j}(\tau s(t)+(1-\tau) \hat{s}(t)) {\rm d} \tau\cdot\left[(s_j(t)-\hat{s}_j(t))-(s_i(t)-\hat{s}_i(t))\right]\\
			&-\int_0^1\left( \frac{\partial T_i}{\partial s_i}(\tau s(t) + (1-\tau)\hat{s}(t)) + \sum_{k:k\sim i} \frac{\partial T_i}{\partial s_k}(\tau s(t) + (1-\tau)\hat{s}(t)) \right) {\rm d} \tau \cdot (s_i(t)-\hat{s}_i(t))\\
			=&\sum_{j:j\sim i}\omega_{ij}(t)\left[(s_j(t)-\hat{s}_j(t))-(s_i(t)-\hat{s}_i(t))\right]+ h_i(t)(s_i(t)-\hat{s}_i(t)).
		\end{aligned}
		$$
		The fact that $\omega_{ij}(t)>0$ and $h_i(t)<0$ follows from \cref{rmk:symmetric} and \cref{lem:sum}, which completes the proof.
	\end{proof}
In proving \cref{thm:uniqueness}, we first prove an estimate of the summation $\sum_{j:j\sim i}\frac{\partial T_i}{\partial s_j}$, with respect to the graph $G'$. 
        \begin{lemma} \label{lem:BoundnessOfSum}
        Let $T_i$ and $s_j$ be defined as before, we have
        $$
		\left|\sum_{j:j\sim i}\frac{\partial T_i}{\partial s_j}\right|\leq CT_{i},\ \text{for all $j\sim i$ in $G'$}.
		$$
        \end{lemma}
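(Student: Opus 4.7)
The plan is to combine the symmetry $\partial T_{i,P}/\partial s_j = \partial T_{j,P}/\partial s_i$ (Remark~\ref{rmk:symmetric}) with the sign control from Lemma~\ref{lem:sum}, reducing the off-diagonal sum to a sum of diagonal terms $\sum_{P}\partial T_{i,P}/\partial s_i$, and then to bound each diagonal term pointwise by a multiple of $T_{i,P}$.

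First, using the symmetry and interchanging summations,
\[
\sum_{j\sim i}\frac{\partial T_i}{\partial s_j} \;=\; \sum_{P:\,i\in V(P)}\; \sum_{j\in V(P),\,j\neq i}\frac{\partial T_{j,P}}{\partial s_i}.
\]
For each face $P$ incident to $i$, Lemma~\ref{lem:sum} gives $\sum_{k\in V(P)}\partial T_{k,P}/\partial s_i > 0$ while each off-diagonal summand $\partial T_{j,P}/\partial s_i$ with $j\neq i$ is negative, so $\bigl|\sum_{j\in V(P),\,j\neq i}\partial T_{j,P}/\partial s_i\bigr| < \partial T_{i,P}/\partial s_i$. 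Summing over $P$ yields
\[
\left|\sum_{j\sim i}\frac{\partial T_i}{\partial s_j}\right| \;\leq\; \sum_{P:\,i\in V(P)}\frac{\partial T_{i,P}}{\partial s_i}.
\]

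Second, I would establish the pointwise estimate $\partial T_{i,P}/\partial s_i \leq C\, T_{i,P}$ with a constant $C$ independent of $i$ and $P$. Writing $T_{i,P} = k_i \ell_{i,P}$, this is equivalent to a bound on the logarithmic derivative $\partial\ln T_{i,P}/\partial\ln k_i$. Using the explicit formula~\eqref{eq:ExpressionOfT} together with Lemma~\ref{lem:curvature} (which ensures $k_P$ is $C^1$ in the $k_j$), one expands $\partial T_{i,P}/\partial s_i$ via the chain rule in each of the three regimes $k_i > 1$, $k_i = 1$, $0 < k_i < 1$, and checks that the bound holds uniformly with a constant that glues continuously across the three cases at $k_i = 1$. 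Summing over faces incident to $i$ and invoking $\sum_P T_{i,P} = T_i$ then yields the desired estimate.

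The main obstacle is the pointwise estimate. The coupling between $k_i$ and the dual curvature $k_P$ makes the chain-rule computation delicate; each case of~\eqref{eq:ExpressionOfT} must be analyzed separately, and the degenerating limits $k_i \to 0$, $k_i \to 1$, and $k_i \to \infty$ require careful matched asymptotics to ensure the ratio $\partial \ln T_{i,P}/\partial \ln k_i$ remains uniformly controlled, particularly since both $T_{i,P}$ and $\partial T_{i,P}/\partial s_i$ degenerate simultaneously at $k_i\to 0$. The a priori bound $T_{i,P} < \pi$ implicit in \cref{cor:boundness} provides the geometric compactness that keeps the constant $C$ uniform.
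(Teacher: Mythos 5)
Your first reduction is fine and coincides with the paper's: by \cref{rmk:symmetric} and \cref{lem:sum} the off-diagonal terms are negative, so the claim localizes to a single face, and your extra observation that the row-sum positivity in \cref{lem:sum} gives $-\sum_{j\in V(P),\,j\neq i}\partial T_{j,P}/\partial s_i<\partial T_{i,P}/\partial s_i$ is correct. But from there your argument rests entirely on the unproven pointwise estimate $\partial T_{i,P}/\partial s_i\leq C\,T_{i,P}$ with a constant uniform in $(k_1,\dots,k_n)$, and this is exactly the analytic content of the lemma; "expand via the chain rule and check the bound in the three regimes" does not go through as stated. Two concrete obstructions: (i) the total derivative $\partial T_{i,P}/\partial s_i$ contains the implicit contribution $k_i\,\frac{\partial T_{i,P}}{\partial k_P}\frac{\partial k_P}{\partial k_i}$, and \cref{lem:curvature} only tells you $k_P$ is $C^1$ — it gives no quantitative control, nor even the sign, of $\partial k_P/\partial k_i$; you would need its explicit formula (or at least its positivity, so the term can be discarded since $\partial T_{i,P}/\partial k_P<0$), neither of which you invoke. (ii) Even for the explicit part, a term-by-term check fails near $k_i=1$: differentiating $T_{i,P}=\tfrac{2k_i}{\sqrt{k_i^2-1}}\operatorname{arccot}\tfrac{k_P}{\sqrt{k_i^2-1}}$ in $k_i$ produces two terms each of size comparable to $\tfrac{2}{(k_i^2-1)k_P}$, which individually dwarf $T_{i,P}\approx 2/k_P$; only their cancellation keeps the logarithmic derivative bounded, so the uniformity of $C$ across $k_i\to 1^\pm$ requires a genuine matched expansion that your sketch defers rather than supplies.

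The paper avoids your diagonal detour altogether: it substitutes the closed-form expression for the off-diagonal sum,
\[
-\sum_{j\in V(P),\,j\neq i}\frac{\partial T_{j,P}}{\partial s_i}
 = k_i\Bigl(\frac{2(k_P^2-1)}{k_P(k_P^2+k_i^2-1)}
 - \frac{2k_i(k_P^2-1)}{(k_i^2+k_P^2-1)^2}\cdot\frac{1}{\sum_{j}\frac{k_Pk_j}{k_P^2+k_j^2-1}}\Bigr),
\]
drops the manifestly nonpositive second term, and is left with the single quantity $\tfrac{2k_i(k_P^2-1)}{k_P(k_P^2+k_i^2-1)}$, which is compared with the formula \eqref{eq:ExpressionOfT} for $T_{i,P}$ in the three cases $k_i>1$, $k_i=1$, $0<k_i<1$ via elementary inequalities (e.g. $\tfrac{u^2}{1+u^2}\leq Cu\arctan\tfrac1u$ and $u\operatorname{arccoth}u\geq C_0$ for $u>1$). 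In that route no cancellation analysis and no control of $\partial k_P/\partial k_i$ is needed at the estimation stage, because the dual-circle dependence has already been absorbed into the explicit formula. Your approach could plausibly be completed, but only after proving the monotonicity of $k_P$ in $k_i$ and carrying out the uniform bound on $\partial\ln T_{i,P}/\partial\ln k_i$ across the degenerations; as written, the proposal asserts rather than proves the key inequality, so there is a genuine gap.
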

\begin{proof}
    
        Observe that
		$$
		\sum_{j:j\sim i} \frac{\partial T_i}{\partial s_j}=\sum_{P:i\in V(P)} \sum_{j\in V(P),j\neq i} \frac{\partial T_{i,P}}{\partial s_j}.
		$$
		By \cref{lem:sum}, for all $j\sim i$
		$$
		\frac{\partial T_{i,P}}{\partial s_j}=\frac{\partial T_{j,P}}{\partial s_i}<0,
		$$
		Therefore, it suffices to prove that for each polygon $P$
		$$
            -\sum_{j\in V(P),j\neq i} \frac{\partial T_{i,P}}{\partial s_j}\leq C T_{i,P}
        $$
        for some uniform constant $C$.
        
		By \cref{rmk:symmetric} and the expression \eqref{eq:ExpressionOfT} of $T_{i,P}$ we have
		$$
		\begin{aligned}
			-\sum_{j\in V(P),j\neq i} \frac{\partial T_{i,P}}{\partial s_j}=&-\sum_{j\in V(P),j\neq i} \frac{\partial T_{j.P}}{\partial s_i}\\
			=&k_i\left(\frac{2(k_P^2-1)}{k_P(k_P^2+k_i^2-1)}- \frac{2k_i(k_P^2-1)}{(k_i^2 +k_P^2-1)^2} \cdot \frac{1}{ \sum_{j=1}^n \frac{k_Pk_j}{(k_P^2 + k_j^2 - 1)}} \right).\\
		\end{aligned}
		$$
        Since the second term is nonpositive, it suffices to prove that
		$$
		\frac{2k_i(k_P^2 - 1)}{k_P(k_P^2 + k_i^2 - 1)} \leq C T_{i,P}.
		$$
		
		We proceed by analyzing three cases based on the value of $k_i$.
		
		  (1). When $k_i>1$, we have $T_i=\frac{2k_i}{\sqrt{k_i^2-1}}\operatorname{arccot} \frac{k_P}{\sqrt{k_i^2-1}}$, then we aim to prove
    		$$
    		\frac{k_P^2-1}{k_P^2+k_i^2-1}\leq C\frac{k_P}{\sqrt{k_i^2-1}}\operatorname{arccot} \frac{k_P}{\sqrt{k_i^2-1}}.
    		$$
    		Let $u=\frac{k_P}{\sqrt{k_i^2-1}}$, then the right-hand side becomes
    		$$
    		\text{RHS} = C u \operatorname{arccot}u=C u \operatorname{arctan}\frac{1}{u},
    		$$ 
    		while the left-hand side satisfies
                $$
                \text{LHS} \leq \frac{k_P^2}{k_P^2+k_i^2-1}=\frac{u^2}{1+u^2}.
                $$
                Thus, the inequality follows since $\frac{u^2}{1 + u^2} \leq C u \operatorname{arctan} \frac{1}{u}$ for some constant $C$.

            (2). When $k_1=1$, $T_i=\frac{2}{k_P}$. It is clear that
            $$
                \frac{2(k_P^2-1)}{k_P(k_P^2+k_i^2-1)}\leq \frac{2}{k_P}.
            $$
            
            (3). When $0<k_i<1$, we have
                $$
                T_i = \operatorname{arccoth} \frac{k_P}{\sqrt{1 - k_i^2}},
                $$
                and we aim to show that
    		$$
    		\frac{k_P^2-1}{k_P^2+k_i^2-1}\leq C\frac{k_P}{\sqrt{1-k_i^2}} \operatorname{arccoth} \frac{k_P}{\sqrt{1-k_i^2}}.
    		$$
    		This follows from the fact that $\frac{k_P}{\sqrt{1 - k_i^2}} > 1$, $\frac{k_P^2 - 1}{k_P^2 + k_i^2 - 1} < 1$, and $u \operatorname{arccoth} u \geq C_0$ for some positive constant $C_0$ when $u > 1$.

            This completes the proof.
\end{proof}
Now, we are able to prove \cref{thm:uniqueness} 

	\begin{proof}[Proof of \cref{thm:uniqueness}]
    Let $\mathcal{K}=\sup_{(i,t)\in V\times[0,M]}\left\{T_i(s(t))+T_i(\hat{s}_i(t))\right\}$. Assume that the face degree of $\mathcal{D}$ is bounded by a number $A$.
    
     By the equation 
     $$
     \left\{\begin{aligned} &\ddt{(s_i(t)-\hat{s}_i(t))}=-(T(s_i(t))-T(\hat{s}_i(t)),  \quad \forall i \in V,\\&s(0)-\hat{s}(0)=0,
        \end{aligned}\right.
     $$
     we have 
    \begin{align}\label{estimate flow}
        |s_i(t)-\hat{s}_i(t)|\le \mathcal{K}M,~\forall i\in V.
    \end{align}
    By \cref{cor:uniqueness} and \cref{lem:uniqueness}, it suffices to prove that $\omega(t)$ defined in \eqref{eq:omega_ij(t)} satisfies the assumption \eqref{eq:omegabound}. Recall that we are working on the graph $G'=(V',E')$ which is defined in \cref{lem:uniqueness}. 
        By equation \eqref{eq:omega_ij(t)} and \cref{lem:BoundnessOfSum},
        \begin{equation}\label{estimate1}
            \sum_{j:j\sim i}\omega_{ij}(t) = - \int_0^1 \sum_{j:j\sim i}\frac{\partial T_i}{\partial s_j}(\tau s(t) + (1-\tau)\hat{s}(t))\,{\rm d} \tau\leq C\int_0^1 T_i(\tau s(t) + (1-\tau)\hat{s}(t)){\rm d} \tau.
        \end{equation}
        
        We claim that 
        $$
        T_{i}(\tau s(t) + (1-\tau)\hat{s}(t))\le C'((T_{i}(s(t))+T_{i}(\hat{s}(t)))
        $$
        for any $\tau\in [0,1]$, where $C'$ is a uniform constant.
        
        To prove the claim, we proceed as follows. Fix $i\in V$, without loss of generality, we assume $s_i(t)\ge \hat{s}_i(t)$.
        Consider a face $P=i_1i_2...i_n\in F$ with $i\in V(P)$. After relabeling the indices if necessary, we assume that $i_1=i$. By the monotonicity in \cref{lem:sum}, we have 
        \begin{align}
          &T_{i,P}(\tau s_{i_1}(t) + (1-\tau)\hat{s}_{i_1}(t),\tau s_{i_2}(t) + (1-\tau)\hat{s}_{i_2}(t)...,\tau s_{i_n}(t) + (1-\tau)\hat{s}_{i_n}(t))\nonumber\\\le &T_{i,P}(s_i(t),\tau s_{i_2}(t) + (1-\tau)\hat{s}_{i_2}(t)...,\tau s_{i_n}(t) + (1-\tau)\hat{s}_{i_n}(t))
        \end{align}
      Furthermore, by \cref{lem:BoundnessOfSum} and \cref{lem:sum}, we have 
      \[
     \left| \frac{\partial\ln T_{i,P}}{\partial s_j}\right|\le C,
      \]
        for some uniform constant $C$ whenever $j\sim i$ in $G'.$
        Combining this with \eqref{estimate flow}, we obtain 
        \begin{align*}
        &\ln T_{i,P}(s_i(t),\tau s_{i_2}(t) + (1-\tau)\hat{s}_{i_2}(t)...,\tau s_{i_n}(t) + (1-\tau)\hat{s}_{i_n}(t))\\\le& \ln T_{i,P}(s(t))+CA\mathcal{K}M.
        \end{align*}
        Exponentiating both sides yields
        \[T_{i,P}(\tau s(t) + (1-\tau)\hat{s}(t))\le \exp(CA\mathcal{K}M)T_{i,P}(s(t))\le C'(T_{i,P}(s(t))+T_{i,P}(\hat{s}(t))).\] 
        The claim follows after summing over all faces $P\in F$ such that $i\in V(P)$.
        
        Substituting the claim into \eqref{estimate1}, we conclude that
        $$
            \sum_{j:j\sim i}\omega_{ij}(t)\leq C\int_0^1 T_i(\tau s(t) + (1-\tau)\hat{s}(t)){\rm d} \tau\leq  C'((T_{i}(s(t))+T_{i}(\hat{s}(t)))\le C'\mathcal{K},
        $$
        which completes the proof. 
        
    
	\end{proof}

	\section{Convergence of the PCFs}\label{sec:4}
        We begin by presenting results on the convergence of geodesic curvatures for a single polygon with $n$ vertices. Let $c = (c_1, \dots, c_n) \in [0, +\infty]^n$ be a nonnegative (possibly infinite) vector. Consider the limiting behavior of the sequence $\{\tilde{P}_m\}_{m=1}^\infty$ as
	$$ 
        \lim_{m\to+\infty} k^m = c = (c_1,      \dots, c_n). 
        $$
	Then we have the following two lemmas.

	    \begin{lemmacite}[\cite{HQSZ}]\label{lem:limitZero}
	        If $c_j = 0$ for some $j$ $(1 \leq j \leq n)$, then 
	        $$ \lim_{m\to+\infty} T_{j,P_m} = 0. $$
	    \end{lemmacite}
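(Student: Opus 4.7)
The plan is to substitute directly into the explicit formula for $T_{j,P_m}$ and show that the curvature factor dominates. Since $c_j=0$, for all sufficiently large $m$ we have $0<k_j^m<1$, so $C_j$ is a hypercycle and
\[
T_{j,P_m}=\alpha_{j,P_m}\cdot\frac{k_j^m}{\sqrt{1-(k_j^m)^2}},\qquad \alpha_{j,P_m}=2\operatorname{arccoth}\frac{k_{P_m}}{\sqrt{1-(k_j^m)^2}}.
\]
The curvature factor $k_j^m/\sqrt{1-(k_j^m)^2}$ tends to $0$, so it suffices to show that the central angle $\alpha_{j,P_m}$ remains bounded uniformly in $m$.

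For this uniform bound I would use the geometric interpretation: $\alpha_{j,P_m}$ is the hyperbolic length of the projection of the arc $C_j\cap D_{P_m}$ onto the axis of the hypercycle $C_j$, and this projection is contained in the segment between the two projected tangency points of $C_j$ with its cyclic neighbors $C_{j-1}$ and $C_{j+1}$. Analytically, $\alpha_{j,P_m}$ blows up if and only if $k_{P_m}/\sqrt{1-(k_j^m)^2}\to 1^+$, which, since $1-(k_j^m)^2\to 1$, is equivalent to $k_{P_m}\to 1^+$. I would rule this out by invoking the $C^1$-dependence of $k_P$ on $(k_1,\dots,k_n)$ from \cref{lem:curvature} and extending it continuously to the degenerate configuration $k_j=0$: in that limit $C_j$ becomes a geodesic tangent to its two limiting neighbors, and the orthogonal dual circle has a positive curvature determined by the other $c_i$'s that stays strictly larger than $1$.

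The main obstacle is the boundary case in which, besides $c_j=0$, some additional $c_i$ equals $0$ or $+\infty$, so that $k^m$ tends to a point on the boundary of the configuration space where a single continuity statement for $k_P$ does not immediately apply. In that scenario I would pass to a subsequence and argue by geometric compactness in the closed disk $\mathbb{H}^2\cup\partial\mathbb{H}^2$: track the two tangency points on either side of $C_j$ and extract subsequential limits (possibly at ideal points). If these limit points are distinct, the arc length $l_{j,P_m}=\alpha_{j,P_m}/\sqrt{1-(k_j^m)^2}$ stays bounded and $T_{j,P_m}=k_j^m\cdot l_{j,P_m}\to 0$; if they collide in the limit, the arc shrinks and $l_{j,P_m}\to 0$ outright. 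Either way the total geodesic curvature vanishes in the limit, completing the argument.
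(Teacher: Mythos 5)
The paper itself does not prove this statement: it is quoted from \cite{HQSZ}, and the only in-paper commentary is \cref{rmk:limitZero}, which records that the proof there actually yields a bound on $T_j$ that is uniform in the other curvatures. Judged on its own, your proposal correctly isolates where the difficulty lies (the factor $k_j^m/\sqrt{1-(k_j^m)^2}$ is harmless; everything hinges on whether $\alpha_{j,P_m}$, equivalently the gap $k_{P_m}-\sqrt{1-(k_j^m)^2}$, can degenerate), but it does not close that case. For the non-degenerate situation you invoke a continuous extension of $k_P$ to the boundary configuration $k_j=0$; \cref{lem:curvature} only gives $C^1$ dependence for $k_i\in\mathbb{R}^+$, so this extension, and the assertion that the limiting dual curvature stays strictly above $1$, are exactly what would have to be proved. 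Worse, that assertion is false in the degenerate situations the lemma explicitly allows (e.g.\ several $c_i=0$, where all tangency points escape to the ideal boundary and $k_{P_m}\to 1$), so the case you defer to the last paragraph is not a boundary nuisance but the actual content of the lemma.

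The compactness dichotomy offered for that case does not work. If the two tangency points of $C_j$ converge to \emph{distinct ideal points}, the hyperbolic arc length $l_{j,P_m}=\alpha_{j,P_m}/\sqrt{1-(k_j^m)^2}$ tends to $+\infty$, not to a bounded quantity, and $T_{j,P_m}=k_j^m\,l_{j,P_m}$ is an indeterminate $0\cdot\infty$ form; similarly, collision of the endpoints at an ideal point (Euclidean collision in the closed disk) does not force $l_{j,P_m}\to 0$, since hyperbolic length is not controlled by Euclidean proximity near $\partial\mathbb{D}^2$. What is missing is a quantitative estimate tying $k_{P_m}$ to $k_j^m$: since $\operatorname{arccoth}(1+\epsilon)\sim\tfrac12\ln(2/\epsilon)$, one must show that $k_{P_m}/\sqrt{1-(k_j^m)^2}-1$ cannot be exponentially small in $1/k_j^m$, so that the logarithmic blow-up of $\alpha_{j,P_m}$ is beaten by the factor $k_j^m$. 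This requires using the equation that determines $k_P$ from $(k_1,\dots,k_n)$ (or a monotonicity argument in the spirit of \cref{lem:sum} giving the uniform bound of \cref{rmk:limitZero}), which is precisely the analytic work carried out in \cite{HQSZ} and absent from your argument.
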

        
    	 \begin{remark}\label{rmk:limitZero}
    	A closer examination of the proof in \cite{HQSZ} reveals a stronger conclusion: for each $i \in V$ and any $\epsilon > 0$, there exists a positive constant $\delta_i > 0$ such that $T_i < \epsilon$ whenever $k_i < \delta_i$.
    	 \end{remark}

	\begin{lemmacite}[\cite{HQSZ}]\label{lem:limitInfty}
		Let $I \subset \{1, \dots, n\}$ be the nonempty vertex set that the corresponding circle degenerates to a point, that is,
		$$ I = \{i : c_i = \infty\}. $$
		Then
		\begin{align}
			\lim_{m\to+\infty} \sum_{i\in I} T_{i,P_m} &= |I|\pi, \quad \text{if } |I| \leq n - 3,  \\
			\lim_{m\to+\infty} \sum_{i\in I} T_{i,P_m} &= (n-2)\pi, \quad \text{if } n-3 < |I| \leq n. 
		\end{align}
	\end{lemmacite}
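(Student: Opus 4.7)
The plan is to pin down the asymptotic behavior of the dual circle's geodesic curvature $k_{P_m}$ and then substitute into the explicit formula \eqref{eq:ExpressionOfT} for $T_{i,P_m}$. Everything will hinge on a dichotomy: whether or not the dual circle $C_{P_m}$ itself degenerates as $m\to\infty$.

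First I would establish this dichotomy, showing that $k_{P_m}$ stays bounded when $|I|\leq n-3$, and $k_{P_m}\to\infty$ when $|I|\geq n-1$ (the borderline $|I|=n-2$ can fall in either regime but both produce the same answer). The key input is the angle-sum condition at the center of $C_{P_m}$: the central angles at that center subtended by the tangency points of consecutive pairs $C_i^m, C_{i+1}^m$ must sum to $2\pi$. Each such central angle is an explicit function of $k_i^m, k_{i+1}^m, k_{P_m}$, and for degenerating vertices ($k_i^m\to\infty$) the corresponding angles collapse unless $k_{P_m}$ also diverges. A case analysis of the angle-sum equation, broken out over the three regimes $k_j^m>1$, $k_j^m=1$, $0<k_j^m<1$ for the non-degenerate vertices $j\notin I$, then yields the threshold.

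With the dichotomy in hand, the case $|I|\leq n-3$ is almost immediate: since $\limsup_m k_{P_m}<\infty$ and $k_i^m\to\infty$ for $i\in I$, we have $k_{P_m}/\sqrt{(k_i^m)^2-1}\to 0$ and $k_i^m/\sqrt{(k_i^m)^2-1}\to 1$, so \eqref{eq:ExpressionOfT} gives $T_{i,P_m}\to 2\operatorname{arccot}(0)\cdot 1=\pi$, and summing over $i\in I$ yields $|I|\pi$. For the case $|I|\geq n-2$, where $k_{P_m}\to\infty$, I would apply Gauss--Bonnet to the generalized hyperbolic polygon $\tilde P_m$:
\[
\sum_{i=1}^n \theta_{i,P_m} \;=\; (n-2)\pi - \mathrm{Area}(\tilde P_m),
\]
where $\theta_{i,P_m}$ is the interior angle at the $i$-th vertex (equal to $\alpha_{i,P_m}$ for a hyperbolic-circle vertex, vanishing at a horocycle ideal vertex, and admitting an analogous limiting identification with right-angle contributions along the axis when $C_i$ is a hypercycle). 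I would then verify (a) $\mathrm{Area}(\tilde P_m)\to 0$ as the polygon collapses under $k_{P_m}\to\infty$, (b) $\alpha_{i,P_m}\to 0$ for each $i\notin I$, because $k_{P_m}/\sqrt{|(k_i^m)^2-1|}\to\infty$ forces the $\operatorname{arccot}$ or $\operatorname{arccoth}$ to zero, and (c) $T_{i,P_m}-\alpha_{i,P_m}\to 0$ for $i\in I$ since $k_i^m/\sqrt{(k_i^m)^2-1}\to 1$. These combine to give $\sum_{i\in I}T_{i,P_m}\to(n-2)\pi$.

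The hardest part will be the dichotomy in the first step---verifying that the sharp boundedness threshold for $k_{P_m}$ sits where the stated formulas change behavior---since the angle-sum equation at the dual center is transcendental and the three circle-type regimes must be handled in parallel. The Gauss--Bonnet step in the second case is also technically delicate when some vertices in $I^c$ are hypercycles, because the ``center'' is then a geodesic axis rather than a point; I expect to handle this by replacing the axis endpoint with an honest boundary segment of $\tilde P_m$ and tracking the two right-angle contributions it introduces, then letting the segment shrink in the limit.
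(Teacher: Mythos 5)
This lemma is not proved in the paper at all: it is quoted verbatim from \cite{HQSZ} (as a \emph{lemmacite}), so there is no in-paper argument to compare against; your proposal has to be judged on its own. As a strategy it is sound and reaches the stated limits. The angle-sum relation at the center of the dual circle (each $C_i$ cuts off an arc of $C_{P_m}$ subtending an angle $\beta_{i}$ with $\sum_i\beta_i=2\pi$, and $\beta_i$ is an explicit monotone function of $k_i,k_{P}$ in each of the three regimes) does give the dichotomy you want: if $k_{P_m}\to\infty$ along a subsequence then $\beta_j\to\pi$ for every non-degenerating vertex, so boundedness of $k_{P_m}$ is forced when $|I|\leq n-3$, while for $|I|\geq n-1$ the complementary angles cannot reach $2\pi$ and $k_{P_m}\to\infty$; substituting into \eqref{eq:ExpressionOfT} then yields $T_{i,P_m}\to\pi$ for $i\in I$ in the bounded regime, and your Gauss--Bonnet bookkeeping (with $\alpha_{i,P}$ the interior angle at circle vertices, zero at horocycle vertices, and an extra geodesic side with two right angles at hypercycle vertices, which cancel) gives $(n-2)\pi$ in the unbounded regime. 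Since $|I|\pi=(n-2)\pi$ when $|I|=n-2$, the ambiguous borderline case is harmless, though you should say explicitly that $k_{P_m}$ may oscillate there and pass to subsequences, noting both regimes give the same limit.

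Two cautions on the parts you yourself flag as delicate. First, the claim ``$\mathrm{Area}(\tilde P_m)\to 0$ as the polygon collapses'' is true but the polygon does \emph{not} collapse: when some $c_j<\infty$ its diameter stays bounded below, and with horocycle vertices outside $I$ it is even unbounded. What actually happens is that $\tilde P_m$ degenerates onto a one-dimensional spine (geodesic segments/rays from the shrinking dual point to the surviving centers, axes, or ideal points), so the area estimate must be a thinness estimate: a wedge of angle $\alpha_{j,P_m}\to 0$ at bounded radius for circle vertices, a strip between two asymptotic rays through nearby points for horocycle vertices, and a band of width $\alpha_{j,P_m}\to 0$ along the axis segment for hypercycle vertices. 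Second, the statement ``$\beta_j\to\pi$ when $k_{P_m}\to\infty$'' must be verified from the explicit formulas in all three regimes (including $c_j=0$), not just for hyperbolic circles; the blow-up heuristic is correct but needs the hyperbolic-trigonometric identities to be written out. Neither point is a gap in the plan---both are exactly where the remaining work lies---and an alternative, closer to what one expects in \cite{HQSZ}, is to avoid Gauss--Bonnet altogether and extract the limit of $\sum_{i\in I}\alpha_{i,P_m}$ directly from the defining equation for $k_{P_m}$, which shortens the second case at the price of more formula manipulation.
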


	\begin{proposition}[\cite{HQSZ}]\label{cor:boundness}
		Let $\{C_1, \dots, C_n\}$ be a generalized circle packing on a topological polygon $P$ with $n$ vertices. Then $T_{i,P} < \pi$ for each $i$.
	\end{proposition}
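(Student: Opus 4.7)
The plan is to deduce the strict bound $T_{i,P} < \pi$ by combining the monotonicity of $T_{i,P}$ in $k_i$, supplied by Lemma~\ref{lem:sum}, with the degeneration limit furnished by Lemma~\ref{lem:limitInfty}. Fix the vertex $i$ and regard $T_{i,P}$ as a function of $k_i \in (0, \infty)$, holding the other curvatures $k_j$ ($j \neq i$) fixed at arbitrary positive values. By Lemma~\ref{lem:curvature} the generalized circle packing exists for each such choice and its data depend $C^1$ on the $k_j$'s; by Lemma~\ref{lem:sum}, $\partial T_{i,P}/\partial k_i > 0$, so $T_{i,P}(k_i)$ is strictly increasing and continuous in $k_i$.

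Next I would take $k_i \to \infty$ along the sequence $k^m = (k_1,\ldots,k_{i-1},m,k_{i+1},\ldots,k_n)$ and apply Lemma~\ref{lem:limitInfty} to identify the limit. Its componentwise limit $c$ has exactly one infinite coordinate, so the degeneration set $I = \{j : c_j = \infty\}$ is the singleton $\{i\}$ with $|I| = 1$. If $n \geq 4$ then $|I| = 1 \leq n - 3$ and Lemma~\ref{lem:limitInfty} gives $\lim_{m} T_{i, P_m} = |I|\pi = \pi$; if $n = 3$ then $|I| = 1 > n - 3 = 0$ and the other branch of Lemma~\ref{lem:limitInfty} gives $\lim_{m} T_{i, P_m} = (n-2)\pi = \pi$. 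Either way, $\lim_{k_i \to \infty} T_{i,P}(k_i) = \pi$.

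Combining strict monotonicity with this finite limit yields $T_{i,P}(k_i) < \pi$ for every $k_i \in (0,\infty)$, which is the desired bound. Since the other curvatures were arbitrary positive values, this covers every configuration addressed by the proposition. The only point requiring care is verifying that both branches of Lemma~\ref{lem:limitInfty} indeed produce the value $\pi$ when the degenerating set is a singleton, which is the small case-check on $n$ above; no further geometric or analytic input beyond Lemmas~\ref{lem:curvature},~\ref{lem:sum}, and~\ref{lem:limitInfty} is needed. An alternative route through direct Gauss--Bonnet estimates on a lune bounded by the arc $C_i \cap D_P$ and an arc of $C_P$ is available, but it requires a fiddly sign and orientation analysis that the present monotonicity-plus-limit argument sidesteps entirely.
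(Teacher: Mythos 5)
Your argument is correct, but note that the paper itself offers no proof of \cref{cor:boundness} to compare against: it is quoted from \cite{HQSZ}, just like the two ingredients you use. Granting those quoted results, your deduction is sound: \cref{lem:curvature} gives existence and $C^1$ dependence for every curvature vector, \cref{lem:sum} gives strict monotonicity of $T_{i,P}$ in $k_i$ with the other curvatures fixed, and \cref{lem:limitInfty} applied to the sequence with only the $i$-th coordinate tending to infinity (so $I=\{i\}$, $|I|=1$) yields the limit $\pi$ in both branches -- your case check $n=3$ versus $n\geq 4$ is handled correctly, and monotonicity upgrades the limit along the integer sequence to the limit as $k_i\to\infty$, whence $T_{i,P}<\sup_{k_i}T_{i,P}=\pi$ strictly. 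The difference from the source is one of economy: the original proof in \cite{HQSZ} works directly from the explicit formula \eqref{eq:ExpressionOfT}, using $k_P>1$ and elementary inequalities (in the spirit of the estimates the present paper carries out in \cref{lem:BoundnessOfSum}), which keeps the bound self-contained and quantitative; your route instead imports the comparatively heavy degeneration lemma \cref{lem:limitInfty}, trading explicit estimates for a clean monotonicity-plus-limit scheme. Both are legitimate; just be aware that if one were auditing \cite{HQSZ} itself, your derivation could be circular there (the degeneration analysis may rely on the uniform bound), whereas within the logical framework of this paper, where all three statements are taken as given, no circularity arises.
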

    
        Now we prove the convergence of the flow \eqref{eq:PCF} under the condition $T_i(s(t)) - \hat{T}_i \geq 0$. This leads to an existence result for noncompact hyperbolic surfaces of infinite topological type.
        
	\begin{proof}[Proof of \cref{thm:convergenceI}]
		Let $s(t)$ be the solution obtained in \cref{thm:existence} as the limit of a subsequence of the sequence $\{s^{[j]}(t)\}_{j=1}^{\infty}$. By \cref{lem:MPI}, we have $T_i(s^{[j]}(t)) - \hat{T}_i \geq 0$ for all $j \in \mathbb{N}$ and $(i, t) \in {\rm int}(V_j) \times [0, +\infty)$. Taking the limit as $j \to \infty$, it follows that $T_i(s(t)) - \hat{T}_i \geq 0$ for all $(i, t) \in V \times [0, +\infty)$. Hence, each $s_i(t)$ is nonincreasing in $t$.
		
		Suppose we have $s_i(t) \to -\infty$ as $t \to \infty$ for some $i \in V$. Then $k_i \to 0$, and by \cref{rmk:limitZero}, $T_i \to 0$, which contradicts the condition $T_i(s(t)) - \hat{T}_i \geq 0$. Therefore, each $s_i(t)$ is bounded below and converges to some value $s_i(\infty)$, which implies that $\frac{ds_i}{dt} = -(T_i(s) - \hat{T}_i)$ tends to $0$ for all $i\in V$. It follows that $T_i(s(\infty)) = \hat{T}_i$ for all $i\in V$.
	\end{proof}
	\begin{proof}[Proof of Corollary \ref{smooth result}]
	    Let $\mathcal{T}=(V,E,F)$ be an infinite triangulation of a noncompact surface. We consider a special generalized circle packing $\mathcal{C}$ of $\mathcal{T}$ such that all generalized circles are horocycles. Then each triangular face $P\in F$ has a circle packing metric with three horocycles as shown in Figure \ref{fig:triangular}. A simple computation shows that for each vertex $i\in P$, we have $T_{i,P}=1.$ Therefore, for the circle packing $\mathcal{C}$, we have $s_i=\ln k_i=0,T_i=\mathrm{deg}(i).$
        Consider the solution $s(t)$ of flow \eqref{eq:PCF} obtained in \cref{thm:existence} as the limit of a subsequence of the sequence $\{s^{[j]}(t)\}_{j=1}^{\infty}$ with $s^0=0\in \mathbb{R}^V$. We have $\hat{T}_{i}\le\deg(i)=T_i(s^0)$ for all $i\in V$.
        By Theorem \ref{thm:convergenceI}, the flow $s(t)$ must converge. Moreover, by the maximum principle, we have $s_i(t)\le s_i(0)=0,~\forall i\in V$, which means all the circles are hypercycles or horocycles.
\begin{figure}
            \centering
            \includegraphics[width=0.43\linewidth]{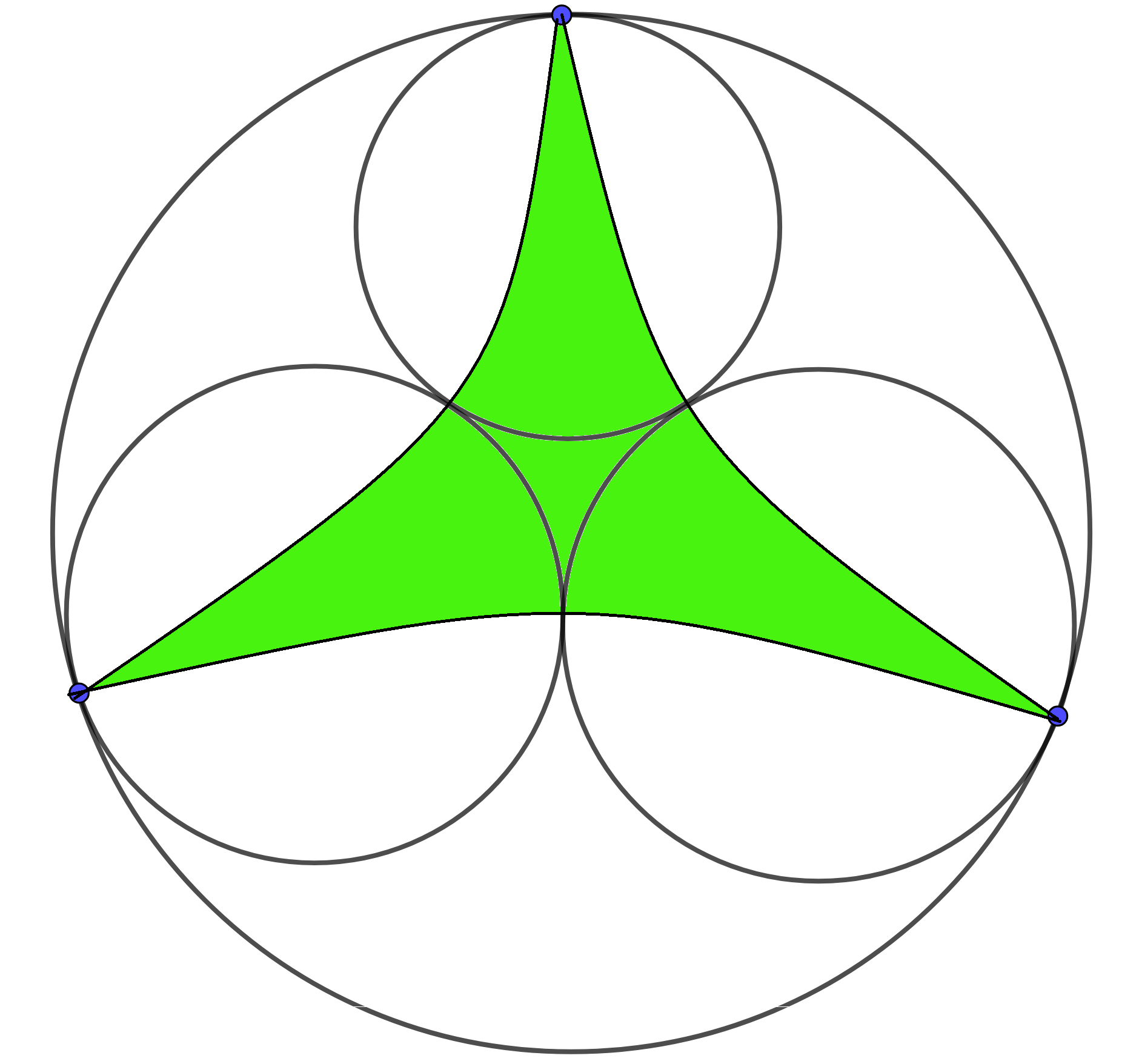}
            \caption{A generalized circle packing metric on a triangle that all circles are horocycles.}
            \label{fig:triangular}
        \end{figure}
                
	\end{proof}
	Next, we consider the case where $T_i(s(t)) - \hat{T}_i \leq 0$, with additional conditions.
	
	\begin{proof}[Proof of \cref{thm:convergenceII}]
		Again let $s(t)$ be the solution obtained in \cref{thm:existence} as the limit of a subsequence of $\{s^{[j]}(t)\}_{j=1}^{\infty}$. By \cref{lem:MPI}, we have $T_i(s^{[j]}(t)) - \hat{T}_i \leq 0$ for all $j$ and $(i,t) \in {\rm int}(V_j) \times [0,+\infty)$. Taking the limit as $j \to \infty$, we obtain $T_i(s(t)) - \hat{T}_i \leq 0$ for all $(i,t) \in V \times [0,+\infty)$. Hence each $s_i(t)$ is nondecreasing in $t$.
		
		Suppose $s_i(t) \to +\infty$ for some $i \in V$ as $t \to \infty$. Then there exists $N \in \mathbb{N}$ such that $i \in H_{N}$. Let 
        $$
        W=\{j\in H_{N}:\lim_{t\to\infty}s_j(t)=+\infty\}.
        $$
        Since $i\in W$, $W\neq\varnothing$. By \cref{lem:limitInfty}, we get
		$$
		\begin{aligned}
			\lim_{t \to \infty} \sum_{w \in W} T_w(t) 
			&\geq \lim_{t \to \infty} \sum_{m=1}^{a_{N}} \sum_{v \in W \cap V(P_m)} T_{v,P_m}(t) \\
			&= \sum_{m=1}^{a_N} \pi \min\{N(P_m,W), N(P_m) - 2\} > \sum_{w \in W} \hat{T}_w,
		\end{aligned}
		$$
		which contradicts the assumption $T_i(s(t)) - \hat{T}_i \leq 0$ for all $i\in V$. Therefore, each $s_i(t)$ is bounded above and converges to some value $s_i(\infty)$, which implies that $\frac{ds_i}{dt} = -(T_i(s) - \hat{T}_i)$ tends to $0$ for all $i\in V$. It follows that $T_i(s(\infty)) = \hat{T}_i$ for all $i\in V$.
	\end{proof}
	    
	    As a consequence of \cref{thm:convergenceII}, we obtain \cref{cor:convergenceII}.
	    
	 \begin{proof}[Proof of \cref{cor:convergenceII}]
        It suffices to prove the existence of an initial value \( s^0 \) such that \( T(s^0) \leq \hat{T} \). According to \cref{rmk:limitZero}, for each \( i \in V \), the total geodesic curvature \( T_i \) can be made arbitrarily small by choosing the corresponding geodesic curvature parameter \( k_i \) sufficiently small, independently of the values of \( k_j \) for \( j \neq i \). 
        
        Therefore, for each \( i \in V \), there exists a constant \( \delta_i > 0 \) such that \( T_i(k) \leq \hat{T}_i \) whenever \( k_i \leq \delta_i \). Defining \( s^0 = (\ln \delta_i)_{i \in V} \), we obtain \( T(s^0) \leq \hat{T} \) as desired. This completes the proof.
    \end{proof}       

        
            
	    
        Finally, we prove \cref{thm:SimpleDecomposition}.

	    \begin{proof}[Proof of \cref{thm:SimpleDecomposition}]
	        By \cref{rmk:limitZero}, there exists an initial value $s^0$ such that $T(s^0)\leq\hat{T}$. Using the same argument as in \cref{thm:convergenceII}, there exists a solution $s(t)$ to flow \eqref{eq:PCF} such that $s_i(t)$ is nondecreasing for every $i\in V$ and $T(s(t))\leq\hat{T}$ for each $t$. Define
	        $$
	        W=\{ i\in V: \lim_{t\to\infty} s_{i}(t)=\infty\}.
	        $$
	        It suffices to show that $W=\varnothing$.
            
	        Let $W_n=W\cap B_{n}$. Since
	        $$
	        \limsup_{t\to\infty}\sum_{w\in W_n} T_w\geq \sum_{P:V(P)\subset B_n} \pi\min\{N(P,W_n),N(P)-2 \},
	        $$
	        we have 
	        $$
	        (1-\frac{\delta}{n^{(1-\epsilon)}})\sum_{P} \pi \min\{N(P,W_n), N(P)-2\}\geq \sum_{P:V(P)\subset B_n} \pi\min\{N(P,W_n),N(P)-2 \}.
	        $$
	        Let $F_n=\{P\in F: V(P)\not\subset B_n, V(P)\cap B_{n}\neq \varnothing\}$. Then
	        $$
	        (1-\frac{\delta}{n^{(1-\epsilon)}})\sum_{P\in F_n} \min\{N(P,W_n), N(P)-2\}\geq \frac{\delta}{n^{(1-\epsilon)}}\sum_{P:V(P)\subset B_n} \min\{N(P,W_n),N(P)-2 \}.
	        $$
	        Since both $\sup_{P\in F} N(P)$ and $\sup_{x\in V} \deg(x)$ are finite, there exists a constant $C$ such that
	        $$
	        |W\cap\partial B_n|\geq C \frac{\delta}{n^{1-\epsilon}-\delta}|W\cap B_n|\geq C \frac{\delta}{n^{1-\epsilon}}|W\cap B_n|.
	        $$
	        Let $a_n=|W\cap B_n|$, then $a_n$ is nondecreasing and $a_n\geq \frac{C\delta}{n^{1-\epsilon}}\sum_{k=1}^n a_n$. Suppose $W\neq\varnothing$, then there exists a positive integer $N$ such that $a_N>0$. For each positive integer $k$,
	        $$
	        a_{2^{k+1}N}\geq \frac{2^kC\delta}{2^{(k+1)(1-\epsilon)}} a_{2^k N}.
	        $$
	        Iterating this, we obtain
	        $$
	            a_{2^{n}N}\geq (C\delta)^n {\sqrt{2}}^{n[(n-1)-(1-\epsilon)(n+1)]} a_N.
	        $$
	        This leads to a contradiction since $a_{2^{n}N}\leq |B_{2^{n}N}|\leq C'{2^{kn}}$ for some constant $C'$ and $k$.
	    \end{proof}
        
\textbf{Acknowledgements.} 
 We are grateful to Bobo Hua for his continuous support and valuable suggestions, which have greatly improved the writing of this paper. We also thank Huabin Ge for many insightful discussions on circle packing problems. In addition, we would like to thank Bohao Ji and Wenhuang Chen for their helpful discussions throughout the course of this research.
	\bibliographystyle{plain}
	\bibliography{main}

\noindent Xinrong Zhao, xrzhao24@m.fudan.edu.cn\\
\emph{School of Mathematical Sciences, Fudan University, Shanghai, 200433, P.R. China}\\[-8pt]

\noindent Puchun Zhou, pczhou22@m.fudan.edu.cn\\
\emph{School of Mathematical Sciences, Fudan University, Shanghai, 200433, P.R. China}
	  
\end{document}